\newtheorem{theorem}{Theorem}
\theoremstyle{plain}
\newtheorem{corollary}{Corollary}
\newtheorem{definition}{Definition}
\newtheorem{lemma}{Lemma}
\newtheorem{proposition}{Proposition}
\newtheorem{remark}{Remark}
\numberwithin{equation}{section}
\begin{document}
\title[Conjugate Connections]{Conjugate Connections and their Applications
on Pure Metallic Metric Geometries}
\author{Olgun DURMAZ}
\address{Ataturk University, Faculty of Science, Department of Mathematics,
25240, Erzurum-Turkey.}
\email{olgun.durmaz@atauni.edu.tr}
\author{Aydin GEZER}
\address{Ataturk University, Faculty of Science, Department of Mathematics,
25240, Erzurum-Turkey.}
\email{aydingzr@gmail.com}

\begin{abstract}
Let $\left( M,J,g\right) $ be a metallic pseudo-Riemannian manifold equipped
with a metallic structure $J$ and a pseudo-Riemannian metric $g$. The paper
deals with interactions of Codazzi couplings formed by conjugate connections
and tensor structures. The presence of Tachibana operator and Codazzi
couplings presented a new characterization for locally metallic
pseudo-Riemannian manifold. Also, a necessary and sufficient condition a
non-integrable metallic pseudo-Riemannian manifold is a quasi metallic
pseudo Riemannian manifold is derived. Finally, it is introduced
metallic-like pseudo-Riemannian manifolds and presented some results
concerning them.

\textbf{Mathematics Subject Classification:} Primary 53C05, 53C55; Secondary
62B10.

\textbf{Keywords :} Codazzi-coupled, conjugate connection, generalized dual
connection, metallic pseudo-Riemannian manifold, statistical manifold.
\end{abstract}

\maketitle

\section{\protect\bigskip Introduction}

The concept of tensor structures on differentiable manifolds is one of the
central concepts of modern differential geometry. Among tensor structures on
a differentiable manifold, one of the best known is almost complex
structure, that is, a $(1,1)-$ tensor field $J$ whose square, at each point,
is minus the identity. The manifold must be even-dimensional, that is, $%
dim=2n$. Usually, it is equipped with a Hermitian metric $g$ which is a
(Riemannian or pseudo-Riemannian) metric that preserves the almost complex
structure, that is, the almost complex structure $J$ acts as an isometry
with respect to the (pseudo-)Riemannian metric. In this case, the $(J,g)$ is
called an almost Hermitian structure. The associated $(0,2)-$tensor of the
Hermitian metric $g$ is a $2-$form $\omega $ which is defined by $\omega
=g(JX,Y)$ and hence the relationship with symplectic geometry.

The other case is that the almost complex structure $J$ acts as an
anti-isometry with respect to a pseudo-Riemannian metric $g$. Such a metric
is known as an anti-Hermitian metric or a Norden metric (first studied by
and named after Norden \cite{Norden}). The Norden metric is necessary
pseudo-Riemannian of neutral signature. In this case, $(J,g)$ is called an
almost anti-Hermitian structure or almost Norden structure or almost $B-$%
structure. The associated $(0,2)-$tensor of any Norden metric which is
defined by $G=g(JX,Y)$ is also a Norden metric. So, in this case we dispose
with a pair of mutually associated Norden metrics, known also as twin Norden
metrics.

Besides almost complex structures, other tensor structures on differentiable
manifolds which are called almost tangent, almost product structures etc.
exist and are important for modern differential geometry. All the tensor
structures we have talked about so far are actually a polynomial structure.
Polynomial structures can be considered as $(1,1)-$ tensor field which are
roots of the algebraic equation%
\begin{equation*}
Q(J)=J^{n}+a_{n}J^{n-1}+...+a_{2}J+a_{1}J=0,
\end{equation*}%
where $a_{1},a_{2},...,a_{n}$ are real numbers and $I$ is the identity
tensor of type $(1,1)$. Polynomial structures on a manifold were defined in 
\cite{Goldberg}. In particular, we can say the followings

\begin{itemize}
\item If $Q(J)=J^{2}+I=0$, its solution $J$ is called an almost complex
structure.

\item If $Q(J)=J^{2}-I=0$, its solution $J$ is called an almost product
structure.

\item If $Q(J)=J^{2}=0$, its solution $J$ is called an almost tangent
structure.

\item If $Q(J)=J^{2}-J-I=0$, its solution $J$ is called an golden structure.
This tensor structure was inspired by the golden ratio, which was described
by Johannes Kepler (1571--1630). The number $\eta =\frac{1+\sqrt{5}}{2}%
=1,618...$, which is a solution of the equation $x^{2}-x-1=0$, is the golden
ratio.

\item If $Q(J)=J^{2}-pJ-qI=0$, , with $p$ and $q$ positive integers, its
solution $J$ is called a metallic structure. The name is motivated by the
fact that the metallic means family or metallic proportions introduced by by
de Spinadel \cite{Spinadel} is the positive root of the quadratic equation $%
x^{2}-px-q=0$, namely $\sigma _{p,q}=\frac{p+\sqrt{p^{2}+4q}}{2}$. If $p=0$,
that is, $J^{2}=qI$, then we call them trivial metallic structures.

We will follow the notations and definitions concerning with metallic
pseudo-Riemannian manifolds given in \cite{Blaga1}.
\end{itemize}

\begin{definition}
\cite{Blaga1} Let $M$ be a smooth (real) manifold and $J:TM\rightarrow TM$
be a tangent bundle isomorphism. A $(1,1)-$tensor field $J$ on $M$ is called
a metallic structure if $J^{2}=pJ+qI$, where $I$ is the identity operator
and $p,q\in 
\mathbb{R}
.$
\end{definition}

The classical definition is used for integrability of a metallic structure.

\begin{definition}
\cite{Blaga1} For a metallic structure $J$, the integrability is equivalent
to the vanishing of the Nijenhuis tensor $N_{J}$ : 
\begin{equation*}
N_{J}(X,Y)=\left[ JX,JY\right] -J\left[ JX,Y\right] -J\left[ X,JY\right]
+J^{2}\left[ X,Y\right] .
\end{equation*}%
Here and further, $X,Y,Z$ will stand for arbitrary differentiable vector
fields on the considered manifold (or vectors in its tangent space at an
arbitrary point of the manifold).
\end{definition}

\begin{definition}
A linear connection $\nabla $ on a smooth manifold $M$ is called a $J-$%
connection if $\nabla J=0$.
\end{definition}

Another equivalent definition for integrability of a polynomial structure is
given by Vanzura in \cite{Vanzura}: in order that a metallic structure $J$
be integrable, it is necessary and sufficient that we can introduce a
torsion-free linear connection $\nabla $ such that $\nabla J=0$, that is,
the torsion-free linear connection $\nabla $ is a $J-$connection.

\begin{definition}
Let $\left( M,g\right) $ be a pseudo-Riemannian manifold and $J$ be a $g-$%
symmetric $\left( 1,1\right) -$tensor field on $M$ such that $J^{2}=pJ+qI$,
where $p$ and $q$ are real numbers. Then $\left( J,g\right) $ is called a
metallic pseudo-Riemannian structure and $\left( M,J,g\right) $ is named a
metallic pseudo-Riemannian manifold. A metallic pseudo-Riemannian manifold $%
\left( M,J,g\right) $ such that the Levi-Civita connection $\nabla ^{g}$
with respect to $g$ is a $J-$connection is called a locally metallic
pseudo-Riemannian manifold.
\end{definition}

Next, we can give the following proposition for another way of describing
locally metallic pseudo-Riemannian manifolds. The following proposition can
be proven by following the method used in \cite{Salimov1}. It will be
presented without proof. So, we will avoid unnecessary repetition.

\begin{proposition}
\label{Proposition7} Let $\left( M,J,g\right) $ be a metallic
pseudo-Riemannian manifold and $\nabla ^{g}$ be the Levi-Civita connection
of $g$. Then $\Phi _{J}g=0$ is equivalent to $\nabla ^{g}J=0$, where $\Phi
_{J}$ is the Tachibana operator defined by 
\begin{equation*}
\left( \Phi _{J}g\right) \left( X,Y,Z\right) =JXg\left( Y,Z\right) -X\left(
g\left( JY,Z\right) \right) +g\left( \left( L_{Y}J\right) X,Z\right)
+g\left( Y,\left( L_{Z}J\right) X\right) .
\end{equation*}%
Here $\left( L_{X}J\right) Y=\left[ X,JY\right] -J\left[ X,Y\right] .$
\end{proposition}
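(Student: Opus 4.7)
The plan is to rewrite both sides of the claimed equivalence entirely in terms of the Levi-Civita connection $\nabla^g$ and then extract $\nabla^g J$ from the Tachibana expression. First I would expand the Lie derivatives occurring in the definition of $\Phi_J$: since $\nabla^g$ is torsion-free, $[X,Y] = \nabla^g_X Y - \nabla^g_Y X$, and a short calculation yields
\[
(L_Y J) X = (\nabla^g_Y J) X - \nabla^g_{JX} Y + J \nabla^g_X Y,
\]
and analogously for $(L_Z J) X$. Next I would rewrite each directional derivative of a metric pairing via metric compatibility; for example $JX \cdot g(Y,Z) = g(\nabla^g_{JX} Y, Z) + g(Y, \nabla^g_{JX} Z)$ and $X \cdot g(JY, Z) = g((\nabla^g_X J)Y, Z) + g(\nabla^g_X Y, JZ) + g(Y, J\nabla^g_X Z)$, where the $g$-symmetry of $J$ is used to turn $g(J \nabla^g_X Y, Z)$ into $g(\nabla^g_X Y, JZ)$ and similarly for the last summand.

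Substituting these ingredients into the four-term definition of $(\Phi_J g)(X,Y,Z)$, the contributions involving $\nabla^g_{JX} Y$, $\nabla^g_{JX} Z$, $\nabla^g_X Y$, and $J\nabla^g_X Z$ cancel pairwise, collapsing the expression to
\[
(\Phi_J g)(X,Y,Z) = -g((\nabla^g_X J) Y, Z) + g((\nabla^g_Y J) X, Z) + g(Y, (\nabla^g_Z J) X).
\]
The implication $\nabla^g J = 0 \Rightarrow \Phi_J g = 0$ is immediate from this. For the converse, I would first observe that $g$-symmetry of $J$ is preserved under $\nabla^g$: differentiating $g(JY,Z) = g(Y,JZ)$ along $X$ and applying metric compatibility gives $g((\nabla^g_X J) Y, Z) = g(Y, (\nabla^g_X J) Z)$. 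Setting $A(X,Y,Z) := g((\nabla^g_X J) Y, Z)$, which is therefore symmetric in its last two slots, the vanishing of $\Phi_J g$ reads $A(X,Y,Z) = A(Y,X,Z) + A(Z,X,Y)$. Interchanging $X$ and $Y$ and then invoking the $(Y,Z)$-symmetry of $A$ produces $2A(Z,X,Y) = 0$, whence $\nabla^g J = 0$.

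The only real obstacle is careful bookkeeping in the expansion step, where roughly ten summands arise and one must recognize each cancelling pair; the symmetry argument afterwards is purely algebraic. Notably, nothing in the proof invokes the metallic relation $J^2 = pJ + qI$, so the equivalence is valid for any $g$-symmetric $(1,1)$-tensor field, which explains why the argument parallels the one Salimov used for Norden and product structures.
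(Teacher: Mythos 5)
Your proof is correct: the expansion of $(\Phi _{J}g)(X,Y,Z)$ into $-g\left( \left( \nabla _{X}^{g}J\right) Y,Z\right) +g\left( \left( \nabla _{Y}^{g}J\right) X,Z\right) +g\left( Y,\left( \nabla _{Z}^{g}J\right) X\right) $ checks out term by term, and the algebraic symmetry argument for the converse (using that $g\left( \left( \nabla _{X}^{g}J\right) Y,Z\right) $ is symmetric in $Y,Z$ together with nondegeneracy of $g$) is sound. The paper states this proposition without proof, referring to the method of Salimov--Iscan--Etayo; your argument is exactly that method, and your closing remark that only the $g$-symmetry of $J$, not the metallic relation $J^{2}=pJ+qI$, is needed is accurate.
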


Now let's turn our attention to the other important actor in our article,
the conjugate connections. The notion of conjugate connections with respect
to a metric tensor field was originally introduced by Norden in the context
of Weyl geometry \cite{Norden}. Such linear connections were independently
developed by Nagaoka and Amari \cite{Nagaoka} under the name dual
connections and used by Lauritzen in the definition of statistical manifolds 
\cite{Lauritzen}. In this direction, we can define two conjugate connection
on a metallic pseudo-Riemannian manifold $\left( M,J,g\right) $, which are
respectively called $g-$conjugate connection ($\nabla ^{\ast }$) and $G-$%
conjugate connection ($\nabla ^{\dag }$):%
\begin{equation}
Zg\left( X,Y\right) =g\left( \nabla _{Z}X,Y\right) +g\left( X,\nabla
_{Z}^{\ast }Y\right)  \label{GD1}
\end{equation}%
and%
\begin{equation}
ZG\left( X,Y\right) =G\left( \nabla _{Z}X,Y\right) +G\left( X,\nabla
_{Z}^{\dag }Y\right) ,  \label{GD2}
\end{equation}%
where $G=g(JX,Y)$ is the twin metric on $\left( M,J,g\right) $ and $\nabla $
is a linear connection. It is easy to see that $\nabla ^{\ast }$ and $\nabla
^{\dag }$ are indeed connections such that $\left( \nabla ^{\ast }\right)
^{\ast }=\left( \nabla ^{\dag }\right) ^{\dag }=\nabla $. Another type of
conjugate connections are those concerning with tensor structures (for
details, \cite{Alek,Bejan,Blaga2}). In our setting, this connection is
defined by 
\begin{equation*}
\nabla _{X}^{J}Y=J^{-1}\left( \nabla _{X}\left( JY\right) \right)
\end{equation*}%
which is called $J-$conjugate connection ($\nabla ^{J}$).

\section{Codazzi Coupling of $\protect\nabla $ with $J$}

Let $\nabla $ be an arbitrary linear connection on a pseudo-Riemannian
manifold $(M,g)$. A symmetric $(0,2)-$tensor field $\rho $ is Codazzi if it
satisfies the symmetry property%
\begin{equation*}
(\nabla _{X}\rho )(Y,Z)=(\nabla _{Z}\rho )(X,Y).
\end{equation*}%
Alternatively, a $(1,1)-$tensor field $J$ is Codazzi if it is self-adjoint
and%
\begin{equation*}
(\nabla _{X}J)Y=(\nabla _{Y}J)X.
\end{equation*}%
We call the pairs $(\nabla ,\rho )$ and $(\nabla ,J)$, respectively, a
Codazzi-coupled.

Let $\left( M,J,g\right) $ be a metallic pseudo-Riemannian manifold. The
inverse of the metallic structure $J$ is as follow: $J^{-1}=\frac{1}{q}%
(J-pI) $ $\left( q\neq 0\right) $. The following proposition is analogue to
the known result given by Fei and Zhang \cite{Fei} for Hermitian setting.

\begin{proposition}
Let $\nabla $ be a linear connection and $J$ be a metallic structure on $M$.
Then the following situations are equivalent

$1)\ \left( \nabla ,J\right) $ is Codazzi-coupled;

$2)\ \nabla $ and $\nabla ^{J}$ have equal torsions;

$3)\ \left( \nabla ^{J},J^{-1}\right) $ is Codazzi-coupled.
\end{proposition}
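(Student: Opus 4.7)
The plan is to reduce all three conditions to a single algebraic identity connecting $\nabla$, $\nabla^{J}$ and $\nabla J$. I would begin from the definition $\nabla_{X}^{J}Y=J^{-1}(\nabla_{X}(JY))$, apply $J$ to both sides, expand $\nabla_{X}(JY)=(\nabla_{X}J)Y+J\nabla_{X}Y$ by the Leibniz rule, and rearrange to obtain the central identity
\begin{equation*}
\nabla_{X}^{J}Y-\nabla_{X}Y=J^{-1}(\nabla_{X}J)Y.
\end{equation*}
Since $q\neq 0$ makes $J^{-1}=\tfrac{1}{q}(J-pI)$ a globally defined bundle isomorphism, this identity is valid on all of $M$.

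For $1)\Leftrightarrow 2)$, I would antisymmetrise the central identity in $X,Y$. Writing $T^{\nabla}(X,Y)=\nabla_{X}Y-\nabla_{Y}X-[X,Y]$ and similarly for $\nabla^{J}$, the Lie-bracket terms cancel and I obtain
\begin{equation*}
T^{\nabla^{J}}(X,Y)-T^{\nabla}(X,Y)=J^{-1}\bigl((\nabla_{X}J)Y-(\nabla_{Y}J)X\bigr).
\end{equation*}
Because $J^{-1}$ is an isomorphism, the right-hand side vanishes precisely when $(\nabla_{X}J)Y=(\nabla_{Y}J)X$, giving the equivalence directly.

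For $2)\Leftrightarrow 3)$ I would exploit the symmetry of the $J$-conjugation construction. A one-line computation,
\begin{equation*}
(\nabla^{J})^{J^{-1}}_{X}Y=J\bigl(\nabla^{J}_{X}(J^{-1}Y)\bigr)=J\cdot J^{-1}\bigl(\nabla_{X}(JJ^{-1}Y)\bigr)=\nabla_{X}Y,
\end{equation*}
establishes $(\nabla^{J})^{J^{-1}}=\nabla$. The two previous steps can then be applied verbatim to the pair $(\nabla^{J},J^{-1})$ in place of $(\nabla,J)$ (whose inverse is now $J$), yielding
\begin{equation*}
T^{\nabla}(X,Y)-T^{\nabla^{J}}(X,Y)=J\bigl((\nabla^{J}_{X}J^{-1})Y-(\nabla^{J}_{Y}J^{-1})X\bigr),
\end{equation*}
so 3) is equivalent to $T^{\nabla^{J}}=T^{\nabla}$, i.e.\ to 2).

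The argument does not involve any serious obstacle; the whole proposition is driven by the Leibniz rule and the tautology $JJ^{-1}=I$. The only point worth being careful about is phrasing $2)\Leftrightarrow 3)$ as a second application of the central identity rather than as a new direct computation. A direct attack substituting $J^{-1}=\tfrac{1}{q}(J-pI)$ into $(\nabla^{J}_{X}J^{-1})Y$ would force one to invoke the metallic relation $J^{2}=pJ+qI$ and to expand $\nabla_{X}(J^{2}Y)$, producing a noticeably messier calculation; exploiting the involutivity $(\nabla^{J})^{J^{-1}}=\nabla$ avoids this altogether.
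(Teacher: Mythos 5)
Your proposal is correct and follows essentially the same route as the paper: your antisymmetrised central identity is exactly the paper's identity (\ref{GD10}), and your reuse of it for the pair $\left( \nabla ^{J},J^{-1}\right) $ via $\left( \nabla ^{J}\right) ^{J^{-1}}=\nabla $ reproduces the paper's identity (\ref{GD11}), where that involutivity appears implicitly in the step $J\left( \nabla _{X}^{J}J^{-1}Y\right) =\nabla _{X}Y$. The only difference is organisational (two biconditionals versus a cycle of three implications), so nothing further is needed.
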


\begin{proof}
$1)\Rightarrow 2):\ $Assume that $\left( \nabla ,J\right) $ is
Codazzi-coupled. Then we have%
\begin{eqnarray}
&&J^{-1}\left( \left( \nabla _{X}J\right) Y-\left( \nabla _{Y}J\right)
X\right)  \label{GD10} \\
&=&J^{-1}\left( \nabla _{X}JY-J\nabla _{X}Y-\nabla _{Y}JX+J\nabla
_{Y}X\right)  \notag \\
&=&J^{-1}\left( \nabla _{X}JY\right) -J^{-1}\left( \nabla _{Y}JX\right)
-\left( \nabla _{X}Y-\nabla _{Y}X\right)  \notag \\
&=&\nabla _{X}^{J}Y-\nabla _{Y}^{J}X-\left( \nabla _{X}Y-\nabla _{Y}X\right)
\notag \\
&=&T^{\nabla ^{J}}\left( X,Y\right) -T^{\nabla }\left( X,Y\right) .  \notag
\end{eqnarray}%
$2)\Rightarrow 3):\ $Suppose that we have $T^{\nabla ^{J}}\left( X,Y\right)
=T^{\nabla }\left( X,Y\right) $. Since%
\begin{eqnarray}
&&J\left( \left( \nabla _{X}^{J}J^{-1}\right) Y-\left( \nabla
_{Y}^{J}J^{-1}\right) X\right)  \label{GD11} \\
&=&J\left( \nabla _{X}^{J}J^{-1}Y\right) -\nabla _{X}^{J}Y-J\left( \nabla
_{Y}^{J}J^{-1}X\right) +\nabla _{Y}^{J}X  \notag \\
&=&\nabla _{X}Y-\nabla _{Y}X-\left( \nabla _{X}^{J}Y-\nabla _{Y}^{J}X\right)
\notag \\
&=&T^{\nabla }\left( X,Y\right) -T^{\nabla ^{J}}\left( X,Y\right) ,  \notag
\end{eqnarray}%
from which it is clear that $\left( \nabla ^{J},J^{-1}\right) $ is
Codazzi-coupled.

$3)\Rightarrow 1):$ The result immediately follows from (\ref{GD10}) and (%
\ref{GD11}).
\end{proof}

Fei and Zhang \cite{Fei} proved that if $J$ is either an almost complex
structure or an almost para-complex structure, then $\left( \nabla
^{J}\right) ^{J}=\nabla $. For the metallic structure $J$, we have the
following proposition.

\begin{proposition}
\label{Proposition2}Let $J$ be a metallic structure on $M$ and $\nabla $ be
a linear connection. If $J$ is a trivial metallic structure $(p=0)$, then we
get $\left( \nabla ^{J}\right) ^{J}=\nabla $.
\end{proposition}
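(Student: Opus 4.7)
The plan is a direct unwinding of the definition of $\nabla^{J}$ applied twice, exploiting the fact that when $p=0$ the square $J^{2}=qI$ is a scalar multiple of the identity, so it commutes past any covariant derivative without introducing $\nabla J$ terms.

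First I would compute, for arbitrary vector fields $X,Y$,
\begin{equation*}
\left(\nabla^{J}\right)^{J}_{X}Y = J^{-1}\!\left(\nabla^{J}_{X}(JY)\right) = J^{-1}\!\left(J^{-1}\!\left(\nabla_{X}(J(JY))\right)\right) = J^{-2}\!\left(\nabla_{X}(J^{2}Y)\right),
\end{equation*}
using only the definition $\nabla^{J}_{X}W=J^{-1}(\nabla_{X}(JW))$ and the linearity of $J^{-1}$ over the module of vector fields (which is automatic since $J^{-1}$ is a $(1,1)$-tensor field). At this stage no hypothesis on $p$ has been used.

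Next I would plug in the hypothesis $p=0$, which gives $J^{2}=qI$ (with $q\neq 0$, so that $J$ is invertible) and therefore $J^{-1}=\tfrac{1}{q}J$ and $J^{-2}=\tfrac{1}{q}I$. Because $q$ is a real constant, $\nabla_{X}(J^{2}Y)=\nabla_{X}(qY)=q\nabla_{X}Y$. Substituting,
\begin{equation*}
\left(\nabla^{J}\right)^{J}_{X}Y = J^{-2}(q\,\nabla_{X}Y) = \tfrac{1}{q}\cdot q\,\nabla_{X}Y = \nabla_{X}Y,
\end{equation*}
which is the required identity $(\nabla^{J})^{J}=\nabla$.

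There is no serious obstacle: the argument is really just the observation that $J^{2}=qI$ acts as a constant scalar, and the reason the analogous identity does not extend to non-trivial metallic structures ($p\neq 0$) is exactly that $\nabla_{X}(J^{2}Y)=p\,\nabla_{X}(JY)+q\,\nabla_{X}Y$ then involves a genuine $\nabla(JY)$ term which cannot be absorbed without assuming $\nabla J=0$. Thus the content of the proposition is that $p=0$ is precisely what makes the second $J$-conjugation cancel the first.
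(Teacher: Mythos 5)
Your proof is correct and follows essentially the same route as the paper: both unwind the definition twice to arrive at $\left(\nabla^{J}\right)^{J}_{X}Y=J^{-2}\left(\nabla_{X}\left(J^{2}Y\right)\right)$ and then invoke $J^{2}=pJ+qI$. The only cosmetic difference is that the paper keeps $p$ general and exhibits the residual term $pJ^{-2}\left(\left(\nabla_{X}J\right)Y\right)$ before setting $p=0$, which is exactly the observation you make in your closing remark.
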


\begin{proof}
For a linear connection $\nabla $ and a metallic structure $J$, we know that%
\begin{equation*}
\nabla _{X}^{J}Y=J^{-1}\left( \nabla _{X}\left( JY\right) \right) .
\end{equation*}%
Using the definition of metallic structure $J$, we can rewrite it as%
\begin{eqnarray*}
\left( \nabla ^{J}\right) _{X}^{J}Y &=&J^{-1}\left( \nabla _{X}^{J}\left(
JY\right) \right) \\
&=&J^{-2}\left( \left( \nabla _{X}J^{2}\right) Y+J^{2}\nabla _{X}Y\right) \\
&=&pJ^{-2}\left( \left( \nabla _{X}J\right) Y\right) +qJ^{-2}\left( \left(
\nabla _{X}I\right) Y\right) +\nabla _{X}Y \\
&=&pJ^{-2}\left( \left( \nabla _{X}J\right) Y\right) +\nabla _{X}Y,
\end{eqnarray*}%
Thus, if $p=0,$ then we have $\left( \nabla ^{J}\right) _{X}^{J}Y=\nabla
_{X}Y.$
\end{proof}

\begin{remark}
Let $M$ be an almost (para)complex manifold equipped a $(1,1)-$tensor field $%
J$ such that $J^{2}=-I$ (or $J^{2}=I)$ where $I$ is the identity operation.
Fei and Zhang obtained a nice result stating that $g-$conjugation, $\omega -$%
conjugation and $J-$conjugation (along with an identity operation) together
form a $4-$element Klein group on the space of linear connections, where $%
\omega $ is the fundemantal $2-$form (see Theorem 2.13 in \cite{Fei}).
Unfortunately, for metallic structure $J$, $J-$conjugation is not
involutive, that is, $\left( \nabla ^{J}\right) ^{J}\neq \nabla $. So, in
our setting, we cannot create a $4-$element Klein group with our arguments.
But, this shows the difference of our structure from structures used in \cite%
{Fei}.
\end{remark}

\begin{proposition}
\label{Proposition1} Let $J$ be a metallic structure on $M$ and $\nabla $ be
a linear connection on $M$ . If $\left( \nabla ,J\right) $ is a
Codazzi-coupled, then$\ \left( \nabla ^{J},J\right) $ is so.
\end{proposition}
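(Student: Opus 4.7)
The plan is to unwind the definition of $\nabla^{J}$, rewrite $(\nabla^{J}_{X}J)Y$ explicitly in terms of $\nabla$, and then close the argument by invoking the first proposition of this section, which identifies Codazzi-coupling of $(\nabla,J)$ with the equality $T^{\nabla^{J}}=T^{\nabla}$. Concretely, I would first compute
\begin{equation*}
(\nabla^{J}_{X}J)Y=\nabla^{J}_{X}(JY)-J\nabla^{J}_{X}Y=J^{-1}\nabla_{X}(J^{2}Y)-\nabla_{X}(JY),
\end{equation*}
then substitute $J^{2}=pJ+qI$ to obtain
\begin{equation*}
(\nabla^{J}_{X}J)Y=p\,\nabla^{J}_{X}Y+qJ^{-1}\nabla_{X}Y-\nabla_{X}(JY).
\end{equation*}

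Next, I would antisymmetrize this expression in $X$ and $Y$. Using $\nabla_{X}(JY)=J\nabla^{J}_{X}Y$ (which is just the definition of $\nabla^{J}$ read in reverse), the antisymmetrized difference becomes a combination of the commutators $A:=\nabla^{J}_{X}Y-\nabla^{J}_{Y}X$ and $B:=\nabla_{X}Y-\nabla_{Y}X$, namely
\begin{equation*}
(\nabla^{J}_{X}J)Y-(\nabla^{J}_{Y}J)X=pA+qJ^{-1}B-JA.
\end{equation*}
Since $(\nabla,J)$ is Codazzi-coupled by hypothesis, the previous proposition gives $T^{\nabla^{J}}=T^{\nabla}$, i.e.\ $A=B$. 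Substituting this and using the algebraic identity $qJ^{-1}=J-pI$ (which follows from multiplying $J^{2}=pJ+qI$ by $J^{-1}$) collapses all four terms against each other and yields zero.

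Self-adjointness of $J$ with respect to $g$ is independent of the connection, so it is immediately inherited by the pair $(\nabla^{J},J)$; this takes care of the second half of the Codazzi condition for free. The main obstacle I anticipate is purely algebraic: because $J$-conjugation fails to be involutive on metallic manifolds (as Proposition \ref{Proposition2} and the subsequent remark emphasize), one cannot simply piggy-back on a Klein-group argument of the Fei--Zhang type. The key trick that rescues the computation is that the metallic relation $J^{2}=pJ+qI$ supplies exactly the identity $qJ^{-1}=J-pI$ needed to cancel the unwanted $p\,A$ and $JA$ terms produced by the expansion. Once this cancellation is spotted, the proof is a short direct calculation.
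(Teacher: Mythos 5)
Your proof is correct. The computation is essentially the one the paper gives --- both expand $(\nabla^{J}_{X}J)Y$ via $\nabla^{J}_{X}(JY)=J^{-1}\nabla_{X}(J^{2}Y)$ and exploit $J^{2}=pJ+qI$ --- but you feed the hypothesis in differently. The paper writes $\nabla_{X}(J^{2}Y)=(\nabla_{X}J^{2})Y+J^{2}\nabla_{X}Y$, uses $\nabla J^{2}=p\,\nabla J$, and keeps everything grouped as multiples of $(\nabla_{X}J)Y-(\nabla_{Y}J)X$, so the Codazzi identity kills all terms directly. You instead regroup into the torsion differences $A=\nabla^{J}_{X}Y-\nabla^{J}_{Y}X$ and $B=\nabla_{X}Y-\nabla_{Y}X$, invoke the earlier equivalence (Codazzi-coupling of $(\nabla,J)$ $\Leftrightarrow$ $T^{\nabla^{J}}=T^{\nabla}$) to get $A=B$, and finish with the identity $qJ^{-1}=J-pI$; each step checks out. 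Both routes are equally short. The paper's is self-contained (it never needs the torsion proposition), while yours makes the structural reason for the cancellation explicit --- equal torsions plus the minimal polynomial of $J$ --- and you rightly note that self-adjointness is connection-independent, a point the paper leaves implicit.
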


\begin{proof}
Assume that $\left( \nabla ,J\right) $ is a Codazzi-coupled. Then, standard
calculations give 
\begin{eqnarray*}
\left( \nabla _{X}^{J}J\right) Y-\left( \nabla _{Y}^{J}J\right) X &=&\nabla
_{X}^{J}\left( JY\right) -J\nabla _{X}^{J}Y-\nabla _{Y}^{J}\left( JX\right)
+J\nabla _{Y}^{J}X \\
&=&J^{-1}\left( \left( \nabla _{X}J^{2}\right) Y+J^{2}\left( \nabla
_{X}Y\right) \right) -\nabla _{X}\left( JY\right) \\
&&-J^{-1}\left( \left( \nabla _{Y}J^{2}\right) X+J^{2}\left( \nabla
_{Y}X\right) \right) +\nabla _{Y}\left( JX\right) \\
&=&pJ^{-1}\left( \left( \nabla _{X}J\right) Y-\left( \nabla _{Y}J\right)
X\right) \\
&&+qJ^{-1}\left( \left( \nabla _{X}I\right) Y-\left( \nabla _{Y}I\right)
X\right) \\
&=&0.
\end{eqnarray*}
\end{proof}

Let $J$ be either an almost complex structure or an almost para-complex
structure. Then $\left( \nabla ,J\right) $ is Codazzi-coupled if and only if$%
\ \left( \nabla ^{J},J\right) $ is Codazzi-coupled \cite{Fei}. For the
metallic structure $J$, the reverse of the Proposition \ref{Proposition1} is
only true for trivial metallic structure $(p=0)$, which is meaningless.

\begin{lemma}
\label{Lemma1} Let $J$ be a metallic structure on $M$ and $\nabla $ be a
linear connection on $M$. If $\left( \nabla ,J\right) $ is a
Codazzi-coupled, then 
\begin{equation*}
N_{J}\left( X,Y\right) =-J^{2}T^{\nabla }\left( X,Y\right) +T^{\nabla
}\left( X,JY\right) +JT^{\nabla }\left( JX,Y\right) -T^{\nabla }\left(
JX,JY\right) ,
\end{equation*}%
where $T^{\nabla }\left( X,Y\right) =\nabla _{X}Y-\nabla _{Y}X-\left[ X,Y%
\right] $ (also, see \cite{Fei}).
\end{lemma}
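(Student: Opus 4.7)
The plan is to unwind both sides in terms of covariant derivatives and then use the Codazzi condition together with the metallic identity $J^2 = pJ + qI$. First I would rewrite every Lie bracket appearing in $N_J(X,Y)$ using the standard formula $[A,B] = \nabla_A B - \nabla_B A - T^\nabla(A,B)$, so that $N_J(X,Y)$ splits into a pure torsion part and a pure covariant-derivative part. Reading off the coefficients $1, -J, -J, J^2$ from the definition of the Nijenhuis tensor, the torsion part is immediately
\begin{equation*}
-T^\nabla(JX,JY) + J\,T^\nabla(JX,Y) + J\,T^\nabla(X,JY) - J^2 T^\nabla(X,Y),
\end{equation*}
which matches the right-hand side of the claim (up to a visible $J$ that I expect is a typographical slip on the third term). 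Everything then reduces to showing that the covariant-derivative part vanishes.

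Next, I would regroup the covariant-derivative part using $(\nabla_A J)B = \nabla_A(JB) - J\nabla_A B$ into the form
\begin{equation*}
(\nabla_{JX} J)Y - (\nabla_{JY} J)X + J(\nabla_Y J)X - J(\nabla_X J)Y.
\end{equation*}
The second pair cancels at once from the Codazzi hypothesis $(\nabla_X J)Y = (\nabla_Y J)X$, so the real question is whether the first pair $(\nabla_{JX} J)Y - (\nabla_{JY} J)X$ also vanishes.

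For this, the key move is to exchange $J$ between the subscript and the argument via the Codazzi symmetry applied to the pairs $(JX,Y)$ and $(X,JY)$, namely $(\nabla_{JX} J)Y = (\nabla_Y J)(JX)$ and $(\nabla_{JY} J)X = (\nabla_X J)(JY)$. Now I use the metallic relation: differentiating $J^2 = pJ + qI$ and solving yields the identity $(\nabla_A J)(JB) = (pI - J)(\nabla_A J)B$. Substituting gives
\begin{equation*}
(\nabla_{JX} J)Y - (\nabla_{JY} J)X = (pI - J)\bigl[(\nabla_Y J)X - (\nabla_X J)Y\bigr],
\end{equation*}
which is zero by Codazzi once more. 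Combined with the torsion computation above, this delivers the stated formula.

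The main obstacle, and the reason the proof is more than bookkeeping, is this last step. The bare Codazzi condition does not by itself kill terms in which $J$ appears inside a subscript of $\nabla J$; to extract $J$ from $(\nabla_A J)(JB)$ one genuinely needs the quadratic relation $J^2 = pJ + qI$. This is also the point where the argument would break for a more general polynomial structure, so I would make sure to invoke the metallic identity explicitly rather than bury it in routine manipulation.
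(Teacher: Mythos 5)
Your argument is correct and complete. Note that the paper offers no proof of this lemma at all --- it simply points to Fei--Zhang --- so the only thing to compare against is the analogous computation there, and yours is the natural one: split $N_J$ into a torsion part and a covariant-derivative part via $[A,B]=\nabla_A B-\nabla_B A-T^{\nabla}(A,B)$, kill $J(\nabla_Y J)X-J(\nabla_X J)Y$ directly by Codazzi, and handle the remaining pair $(\nabla_{JX}J)Y-(\nabla_{JY}J)X$ by first swapping slots with Codazzi and then pulling $J$ out of the argument with the differentiated metallic relation $(\nabla_A J)(JB)=(pI-J)(\nabla_A J)B$, after which Codazzi applies once more. I checked each of these steps, including the derivation of that last identity from $J^2=pJ+qI$, and they all go through. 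You are also right to flag the typographical error in the statement: the torsion bookkeeping gives $-J^{2}T^{\nabla}(X,Y)+JT^{\nabla}(X,JY)+JT^{\nabla}(JX,Y)-T^{\nabla}(JX,JY)$, with a $J$ in front of $T^{\nabla}(X,JY)$; a quick sanity check is that the printed right-hand side is not antisymmetric under $X\leftrightarrow Y$ while $N_J$ is, so the $J$ must be present. Your closing remark correctly isolates the one genuinely non-mechanical step, namely that Codazzi alone only moves $J$ from the subscript into the argument of $\nabla J$, and the quadratic relation is what extracts it from there.
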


For the integrability of a metallic structure $J$, the following proposition
immediately follows from Lemma \ref{Lemma1}.

\begin{proposition}
Let $J$ be a metallic structure on $M$ and $\nabla $ be a linear connection
on $M$. If $\left( \nabla ,J\right) $ is Codazzi-coupled and $T^{\nabla
}\left( X,Y\right) =0$, then the metallic structure $J$ is integrable.
\end{proposition}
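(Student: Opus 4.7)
The plan is to derive the conclusion as an almost immediate corollary of Lemma \ref{Lemma1}. Since Lemma \ref{Lemma1} already provides the identity
\begin{equation*}
N_{J}(X,Y) = -J^{2}T^{\nabla}(X,Y) + T^{\nabla}(X,JY) + JT^{\nabla}(JX,Y) - T^{\nabla}(JX,JY)
\end{equation*}
under the hypothesis that $(\nabla, J)$ is Codazzi-coupled, the strategy is simply to substitute the second hypothesis $T^{\nabla}(X,Y) = 0$ into this formula.

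Concretely, I would first recall that the vanishing of the torsion tensor is a pointwise identity valid on \emph{all} pairs of vector fields, so in particular $T^{\nabla}(X,JY) = 0$, $T^{\nabla}(JX,Y) = 0$, and $T^{\nabla}(JX,JY) = 0$. Each of the four terms on the right-hand side of the displayed formula therefore vanishes, which yields $N_{J}(X,Y) = 0$ for arbitrary $X, Y$. By the definition of integrability of a metallic structure (vanishing of the Nijenhuis tensor $N_{J}$ recalled in the second definition of the introduction), this precisely means that $J$ is integrable.

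There is essentially no obstacle here: the entire content of the proposition is carried by Lemma \ref{Lemma1}, and the present statement is obtained by specializing that lemma to the torsion-free case. The only thing worth remarking in the write-up is that no assumption on $p$ or $q$ (and no extra regularity on $J$ beyond the metallic relation $J^{2} = pJ + qI$) is needed, since $J^{2}T^{\nabla}(X,Y)$ is simply the zero vector once $T^{\nabla}$ vanishes.
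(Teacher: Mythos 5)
Your argument is correct and is exactly what the paper intends: the paper states that the proposition ``immediately follows from Lemma \ref{Lemma1}'' and gives no further proof, and your substitution of $T^{\nabla}=0$ into the four torsion terms of that lemma to conclude $N_{J}=0$ is precisely that deduction. Nothing is missing.
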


\section{Codazzi Coupling of $\protect\nabla $ with $g$ and $G$}

Given a pseudo-Riemannian manifold $(M,g)$ endowed with a metallic structure 
$J$, then the triple $(M,J,g)$ is called a metallic pseudo-Riemannian
manifold if \cite{Blaga1}

\begin{equation*}
g(JX,Y)=g(X,JY).
\end{equation*}
Also, the twin metallic pseudo-Riemannian metric $G$ is defined by

\begin{equation*}
G(X,Y)=g(JX,Y).
\end{equation*}%
From the equalities of the above, it is easy to see the following result.

\begin{proposition}
Let $(M,J,g)$ be metallic pseudo-Riemannian manifold and $G$ be the twin
pseudo-Riemannian metric. Then the following equalities hold:

$1)\ G\left( X,Y\right) =G\left( Y,X\right) $;

$2)\ G\left( JX,Y\right) =G\left( X,JY\right) $, that is, $(M,J,G)$ is a
metallic pseudo-Riemannian manifold;

$3)\ g\left( J^{-1}X,Y\right) =g\left( X,J^{-1}Y\right) $, that is, $%
(M,J^{-1},g)$ is a metallic pseudo-Riemannian manifold;

$4)\ G\left( J^{-1}X,Y\right) =G\left( X,J^{-1}Y\right) $, that is, $%
(M,J^{-1},G)$ is a metallic pseudo-Riemannian manifold.
\end{proposition}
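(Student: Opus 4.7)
The plan is to verify each of the four equalities by direct manipulation, using only the defining identity $G(X,Y)=g(JX,Y)$, the symmetry of $g$, the $g$-symmetry of $J$ (i.e.\ $g(JX,Y)=g(X,JY)$, which is part of the hypothesis that $(M,J,g)$ is metallic pseudo-Riemannian), and the explicit expression $J^{-1}=\frac{1}{q}(J-pI)$ recorded in the paragraph just before Section~2. No integrability assumption or connection data will be needed, so the proof reduces to pointwise linear algebra.

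For (1), I would chain $G(X,Y)=g(JX,Y)=g(Y,JX)=g(JY,X)=G(Y,X)$, where the first transposition uses symmetry of $g$ and the next uses the $g$-symmetry of $J$. For (2), the identity $J^{2}=pJ+qI$ is not even required: applying $g$-symmetry of $J$ twice yields
\begin{equation*}
G(JX,Y)=g(J(JX),Y)=g(JX,JY)=g(X,J(JY))=G(X,JY),
\end{equation*}
which simultaneously certifies that $J$ is $G$-symmetric and hence that $(M,J,G)$ is again metallic pseudo-Riemannian (the algebraic relation $J^{2}=pJ+qI$ holds on the level of the $(1,1)$-tensor $J$ and is independent of which compatible metric is used).

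Part (3) is the step where the explicit formula for $J^{-1}$ enters. I would substitute $J^{-1}=\frac{1}{q}(J-pI)$ on the left and push $J$ across using the $g$-symmetry of $J$:
\begin{equation*}
g(J^{-1}X,Y)=\tfrac{1}{q}\bigl(g(JX,Y)-p\,g(X,Y)\bigr)=\tfrac{1}{q}\bigl(g(X,JY)-p\,g(X,Y)\bigr)=g(X,J^{-1}Y).
\end{equation*}
For (4) the cleanest route is to exploit that $J$ and $J^{-1}$ commute as endomorphisms:
\begin{equation*}
G(J^{-1}X,Y)=g(J\,J^{-1}X,Y)=g(X,Y)=g(X,J\,J^{-1}Y)=g(JX,J^{-1}Y)=G(X,J^{-1}Y),
\end{equation*}
where the third equality is another application of $g$-symmetry of $J$. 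Alternatively, (4) follows formally by combining (2) with (3).

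There is no genuine obstacle here; the only mild subtlety is that (3) and (4) require $q\neq 0$ in order for $J^{-1}$ to exist, exactly as noted when the inverse formula was introduced. I would state this hypothesis once at the start of the proof and then carry out the four computations in the order above.
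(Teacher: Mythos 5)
Your proof is correct and is exactly the direct verification the paper has in mind (the paper omits the proof, remarking only that the equalities follow easily from the definitions of $G$, the $g$-symmetry of $J$, and the formula $J^{-1}=\frac{1}{q}(J-pI)$). All four computations check out, and your observation that $q\neq 0$ is needed for (3) and (4) matches the paper's own caveat when it introduces $J^{-1}$.
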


Let $C$ and $\Gamma $ be the $\left( 0,3\right) $-tensor defined by%
\begin{equation}
C\left( X,Y,Z\right) \equiv \left( \nabla _{Z}g\right) \left( X,Y\right)
=Zg\left( X,Y\right) -g\left( \nabla _{Z}X,Y\right) -g\left( X,\nabla
_{Z}Y\right)  \label{GD3}
\end{equation}%
and 
\begin{equation}
\Gamma \left( X,Y,Z\right) \equiv \left( \nabla _{Z}G\right) \left(
X,Y\right) =ZG\left( X,Y\right) -G\left( \nabla _{Z}X,Y\right) -G\left(
X,\nabla _{Z}Y\right) .  \label{GD5}
\end{equation}%
Due to symmetry of $g$ and $G$, it is clear that $C\left( X,Y,Z\right)
=C\left( Y,X,Z\right) $ and $\Gamma \left( X,Y,Z\right) =\Gamma \left(
Y,X,Z\right) $. From (\ref{GD1}) and (\ref{GD3}) (resp., (\ref{GD2}) and (%
\ref{GD5})), we get 
\begin{equation*}
C\left( X,Y,Z\right) =g\left( X,\left( \nabla ^{\ast }-\nabla \right)
_{Z}Y\right)
\end{equation*}%
and%
\begin{equation*}
\Gamma \left( X,Y,Z\right) =G\left( X,\left( \nabla ^{\dag }-\nabla \right)
_{Z}Y\right) .
\end{equation*}%
Also, it is easy to see that%
\begin{equation}
C^{\ast }\left( X,Y,Z\right) \equiv \left( \nabla _{Z}^{\ast }g\right)
\left( X,Y\right) =-C\left( X,Y,Z\right)  \label{GD6}
\end{equation}%
and 
\begin{equation}
\Gamma ^{\dag }\left( X,Y,Z\right) \equiv \left( \nabla _{Z}^{\dag }G\right)
\left( X,Y\right) =-\Gamma \left( X,Y,Z\right) .  \label{GD7}
\end{equation}%
For the metallic structure $J$, the relationship between $C=\nabla g$ and $%
\Gamma =\nabla G$ is as follow%
\begin{equation}
\Gamma \left( X,Y,Z\right) =C\left( X,JY,Z\right) +g\left( X,\left( \nabla
_{Z}J\right) Y\right) .  \label{GD8}
\end{equation}%
From (\ref{GD8}), if $\nabla $ is a $J-$connection, then we get $\Gamma
\left( X,Y,Z\right) =C\left( X,JY,Z\right) $.

Next, we give the following proposition which is analogue to result given in 
\cite{Fei} for Hermitian setting.

\begin{proposition}
Let $G$ be the twin metallic pseudo-Riemannian metric, $\nabla $ be a linear
connection and $\nabla ^{\dag }$ be the $G-$conjugate connection of $\nabla $%
. Then the following conditions are equivalent

$1)\ \left( \nabla ,G\right) $ is Codazzi-coupled,

$2)\ \left( \nabla ^{\dag },G\right) $ is Codazzi-coupled,

$3)\ \Gamma $ is totally symmetric,

$4)\ \Gamma ^{\dag }$ is totally symmetric,

$5)\ T^{\nabla }=T^{\nabla ^{\dag }}$,\newline
where $\Gamma =\nabla G$ and $\Gamma ^{\dag }=\nabla ^{\dag }G$.
\end{proposition}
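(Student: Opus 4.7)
The plan is to show the equivalence of all five conditions by proving that each is equivalent to the single statement that $\Gamma$ is totally symmetric (condition $3)$). First I would record a structural observation: since $G$ is symmetric, the tensor $\Gamma(X,Y,Z) = (\nabla_Z G)(X,Y)$ is automatically symmetric in its first two slots, $\Gamma(X,Y,Z) = \Gamma(Y,X,Z)$, and likewise $\Gamma^{\dag}$ is symmetric in its first two slots.

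For $1) \Leftrightarrow 3)$, the Codazzi-coupling of $(\nabla, G)$ reads $(\nabla_X G)(Y,Z) = (\nabla_Z G)(X,Y)$, which in our notation becomes the cyclic identity $\Gamma(Y,Z,X) = \Gamma(X,Y,Z)$. Combining this cyclic identity with the automatic swap $\Gamma(X,Y,Z) = \Gamma(Y,X,Z)$ yields two generators of the symmetric group $S_{3}$ acting on the three arguments, forcing $\Gamma$ to be invariant under every permutation; conversely, total symmetry trivially implies Codazzi. The identical argument applied to $\nabla^{\dag}$ and $\Gamma^{\dag}$ gives $2) \Leftrightarrow 4)$, and $3) \Leftrightarrow 4)$ is immediate from equation (\ref{GD7}), which states $\Gamma^{\dag} = -\Gamma$ and therefore transports every symmetry of $\Gamma$ to $\Gamma^{\dag}$.

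To handle $5) \Leftrightarrow 3)$, I would introduce the difference tensor $K_{Z} Y := \nabla^{\dag}_{Z} Y - \nabla_{Z} Y$, for which the displayed identity $\Gamma(X,Y,Z) = G(X, (\nabla^{\dag} - \nabla)_{Z} Y)$ reads $\Gamma(X,Y,Z) = G(X, K_{Z} Y)$. A direct computation from the definition of torsion then yields
\begin{equation*}
T^{\nabla^{\dag}}(X,Y) - T^{\nabla}(X,Y) = K_{X} Y - K_{Y} X,
\end{equation*}
and pairing both sides against an arbitrary $Z$ via $G$ produces $G(Z, K_{X} Y - K_{Y} X) = \Gamma(Z,Y,X) - \Gamma(Z,X,Y)$. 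Since $q \neq 0$ makes $J$ invertible and hence $G$ non-degenerate, $T^{\nabla} = T^{\nabla^{\dag}}$ is equivalent to $\Gamma(Z,Y,X) = \Gamma(Z,X,Y)$ for all $X,Y,Z$, i.e., symmetry of $\Gamma$ in the last two slots; combined with the first-two swap this is again total symmetry, proving $5) \Leftrightarrow 3)$.

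The main obstacle I anticipate is purely bookkeeping: the three slots of $\Gamma$ play distinct roles (two come from $G(\cdot,\cdot)$ and one from the direction of covariant differentiation), and the Codazzi and torsion conditions each prescribe a different swap among them. Once the argument is organized around the single pivot statement that $\Gamma$ is totally symmetric, all five conditions line up with it.
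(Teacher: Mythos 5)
Your proof is correct: the cyclic Codazzi identity together with the automatic symmetry of $\Gamma $ in its first two slots generates all of $S_{3}$, the identity $\Gamma ^{\dag }=-\Gamma $ from (\ref{GD7}) transports total symmetry between the two tensors, and the difference-tensor computation paired against the non-degenerate metric $G$ correctly identifies $T^{\nabla }=T^{\nabla ^{\dag }}$ with symmetry of $\Gamma $ in its last two slots. The paper itself omits the argument, deferring to \cite{Fei,Gezer1}, and your proof is precisely the standard method used there, so it fills in exactly the details the paper leaves out.
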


\begin{proof}
The assertion can be easily proved by following the method in \cite%
{Fei,Gezer1}.
\end{proof}

\begin{proposition}
Let $(M,J,g)$ be a metallic pseudo-Riemannian manifold, $G$ be the twin
metallic pseudo-Riemannian metric and $\nabla $ be a linear connection. If
both $\left( \nabla ,J\right) $ and $\left( \nabla ,G\right) $ are
Codazzi-coupled, then%
\begin{equation*}
G\left( \left( \nabla _{X}^{\dag }J\right) Y-\left( \nabla _{Y}^{\dag
}J\right) X,Z\right) =G\left( Y,\left( \nabla _{Z}J\right) X-\left( \nabla
_{Z}^{\dag }J\right) X\right) ,
\end{equation*}%
where $\nabla ^{\dag }$ is the $G-$conjugate connection of $\nabla $.
\end{proposition}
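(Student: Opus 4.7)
My plan is to reduce everything to the difference tensor between $\nabla^{\dag}$ and $\nabla$ and then exploit the total symmetry of $\Gamma=\nabla G$ guaranteed by the assumption that $(\nabla,G)$ is Codazzi-coupled.

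First, I would write $\nabla^{\dag}=\nabla+K$ where $K$ is the $(1,2)$-tensor defined by $K_{X}Y=\nabla^{\dag}_{X}Y-\nabla_{X}Y$. Combining (\ref{GD2}), (\ref{GD5}) and the identity $\Gamma(X,Y,Z)=G(X,(\nabla^{\dag}-\nabla)_{Z}Y)$ already recorded in the text yields the crucial dictionary
\begin{equation*}
G(X,K_{Z}Y)=\Gamma(X,Y,Z).
\end{equation*}
A direct expansion of $(\nabla^{\dag}_{X}J)Y$ via the Leibniz rule then produces
\begin{equation*}
(\nabla^{\dag}_{X}J)Y-(\nabla^{\dag}_{Y}J)X=\bigl[(\nabla_{X}J)Y-(\nabla_{Y}J)X\bigr]+\bigl[K_{X}(JY)-K_{Y}(JX)\bigr]-J\bigl[K_{X}Y-K_{Y}X\bigr],
\end{equation*}
and the hypothesis that $(\nabla,J)$ is Codazzi-coupled kills the first bracket.

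Next, I would pair this with $Z$ under $G$. Two ingredients are needed: the symmetry of $G$, and the identity $G(JA,B)=G(A,JB)$, which is inherited from the $g$-symmetry of $J$ through $G(JA,B)=g(J^{2}A,B)=g(A,J^{2}B)=G(A,JB)$. Applying these together with the dictionary above rewrites the left-hand side of the claimed identity as a sum of four $\Gamma$-values, namely
\begin{equation*}
\Gamma(Z,JY,X)-\Gamma(Z,JX,Y)-\Gamma(JZ,Y,X)+\Gamma(JZ,X,Y).
\end{equation*}
The previous proposition gives that $\Gamma$ is totally symmetric, so the last two terms cancel via the swap in the last two slots, leaving $\Gamma(Z,JY,X)-\Gamma(Z,JX,Y)$.

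Finally, an entirely analogous expansion of $(\nabla_{Z}J)X-(\nabla^{\dag}_{Z}J)X=JK_{Z}X-K_{Z}(JX)$ and pairing with $Y$ under $G$ reduces the right-hand side to $\Gamma(JY,X,Z)-\Gamma(Y,JX,Z)$. Matching the two expressions amounts to the permutation identities $\Gamma(Z,JY,X)=\Gamma(JY,X,Z)$ and $\Gamma(Z,JX,Y)=\Gamma(Y,JX,Z)$, both immediate from the total symmetry of $\Gamma$. The only step that needs care is bookkeeping of the three slots of $\Gamma$ when transporting arguments through $G$ and through $J$; once the dictionary $G(X,K_{Z}Y)=\Gamma(X,Y,Z)$ is fixed and total symmetry of $\Gamma$ is in hand, the proof is a short symbolic manipulation with no remaining obstacle.
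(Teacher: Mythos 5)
Your proposal is correct and follows essentially the same route as the paper: both arguments use the Codazzi-coupling of $(\nabla ,J)$ to remove the $(\nabla J)$-antisymmetrization, translate the remaining terms into values of $\Gamma =\nabla G$ via $\Gamma (X,Y,Z)=G(X,(\nabla ^{\dag }-\nabla )_{Z}Y)$, and finish with the total symmetry of $\Gamma $ (equivalently $T^{\nabla }=T^{\nabla ^{\dag }}$, which is exactly your cancellation $\Gamma (JZ,Y,X)=\Gamma (JZ,X,Y)$) supplied by the Codazzi-coupling of $(\nabla ,G)$. Your packaging through the difference tensor $K=\nabla ^{\dag }-\nabla $ is only a notational variant of the paper's direct expansion.
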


\begin{proof}
If both $\left( \nabla ,J\right) $ and $\left( \nabla ,G\right) $ are
Codazzi-coupled, then%
\begin{eqnarray*}
&&G\left( \left( \nabla _{X}^{\dag }J\right) Y-\left( \nabla _{Y}^{\dag
}J\right) X,Z\right) \\
&=&G\left( \nabla _{X}^{\dag }JY-J\nabla _{X}^{\dag }Y,Z\right) -G\left(
\nabla _{Y}^{\dag }JX-J\nabla _{Y}^{\dag }X,Z\right) \\
&=&XG\left( Z,JY\right) -G\left( \nabla _{X}Z,JY\right) -YG\left(
Z,JX\right) +G\left( \nabla _{Y}Z,JX\right) \\
&&+G\left( J\nabla _{Y}^{\dag }X,Z\right) -G\left( J\nabla _{X}^{\dag
}Y,Z\right) \\
&=&\Gamma \left( Z,JY,X\right) +G\left( Z,\nabla _{X}JY\right) -\Gamma
\left( Z,JX,Y\right) -G\left( \nabla _{Y}JX,Z\right) \\
&&+G\left( JZ,\nabla _{Y}^{\dag }X-\nabla _{X}^{\dag }Y\right) \\
&=&\Gamma \left( Z,JY,X\right) -\Gamma \left( Z,JX,Y\right) +G\left(
JZ,T^{\nabla }\left( X,Y\right) -T^{\nabla ^{\dag }}\left( X,Y\right) \right)
\\
&=&\Gamma \left( X,JY,Z\right) -\Gamma \left( Y,JX,Z\right) \\
&=&G\left( Y,\left( \nabla _{Z}J\right) X-\left( \nabla _{Z}^{\dag }J\right)
X\right) ,
\end{eqnarray*}%
which completes the proof.
\end{proof}

\begin{proposition}
Let $(M,J,g)$ be a metallic pseudo-Riemannian manifold, $G$ be the twin
metallic pseudo-Riemannian metric and $\nabla $ be a linear connection. If
both $\left( \nabla ,g\right) $ and $\left( \nabla ,G\right) $ are
Codazzi-coupled, then $\left( \nabla ^{\ast },J\right) $ is Codazzi-coupled,
where $\nabla ^{\ast }$ is the $g-$conjugate connection of $\nabla $.
\end{proposition}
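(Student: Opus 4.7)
The plan is to reduce the desired identity $(\nabla^{\ast}_X J)Y = (\nabla^{\ast}_Y J)X$ to an algebraic statement about the $(0,3)$-tensors $C=\nabla g$ and $\Gamma=\nabla G$, and then exploit the total symmetries that the two Codazzi-coupling hypotheses force on $C$ and $\Gamma$. Since $J$ is already $g$-symmetric by the definition of a metallic pseudo-Riemannian structure, verifying this pointwise identity is exactly what it takes to establish the Codazzi-coupling of $(\nabla^{\ast},J)$.

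First I would establish the ``swap'' identity
\[
g\bigl(Z,(\nabla^{\ast}_X J)Y\bigr) = g\bigl(Y,(\nabla_X J)Z\bigr)
\]
for all vector fields $X$, $Y$, $Z$. The recipe is to expand $(\nabla^{\ast}_X J)Y = \nabla^{\ast}_X(JY) - J\nabla^{\ast}_X Y$, apply the conjugation relation (\ref{GD1}) to each of $g(Z,\nabla^{\ast}_X(JY))$ and $g(JZ,\nabla^{\ast}_X Y)$, and use $g(JZ,Y) = g(Z,JY)$ to cancel the two $Xg(\cdot,\cdot)$ terms; what remains is precisely $g(\nabla_X(JZ) - J\nabla_X Z, Y) = g((\nabla_X J)Z, Y)$.

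Next I would invoke (\ref{GD8}) with relabeled arguments to rewrite both occurrences of $\nabla J$: namely,
\[
g\bigl(Y,(\nabla_X J)Z\bigr) = \Gamma(Y,Z,X) - C(Y,JZ,X)
\]
and
\[
g\bigl(X,(\nabla_Y J)Z\bigr) = \Gamma(X,Z,Y) - C(X,JZ,Y).
\]
Subtracting and applying the swap identity to both $(\nabla^{\ast}_X J)Y$ and $(\nabla^{\ast}_Y J)X$ produces
\[
g\bigl(Z,(\nabla^{\ast}_X J)Y-(\nabla^{\ast}_Y J)X\bigr) = \bigl[\Gamma(Y,Z,X)-\Gamma(X,Z,Y)\bigr] - \bigl[C(Y,JZ,X)-C(X,JZ,Y)\bigr].
\]
Finally I would invoke the two Codazzi hypotheses. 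The coupling of $(\nabla,g)$ combined with the symmetry of $g$ in its first two slots forces $C$ to be totally symmetric in all three arguments, and analogously $(\nabla,G)$ Codazzi-coupled forces $\Gamma$ totally symmetric. Hence $\Gamma(Y,Z,X)=\Gamma(X,Z,Y)$ and $C(Y,JZ,X)=C(X,JZ,Y)$, so both bracketed terms vanish for every $Z$. Nondegeneracy of $g$ then yields $(\nabla^{\ast}_X J)Y = (\nabla^{\ast}_Y J)X$, which is the claimed Codazzi-coupling of $(\nabla^{\ast},J)$.

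The only real obstacle, as I see it, is the careful bookkeeping in the swap step: one has to deploy (\ref{GD1}) in a way that makes the $X$-derivative terms cancel cleanly and leaves a single $\nabla$-derivative of $J$ with the roles of $Y$ and $Z$ interchanged. That is where the $g$-symmetry of $J$ does its essential work; after that, everything is forced by the symmetry properties of $C$ and $\Gamma$.
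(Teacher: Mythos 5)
Your proposal is correct and follows essentially the same route as the paper's own proof: both rest on the identity (\ref{GD8}) relating $C$, $\Gamma$ and $\nabla J$, the total symmetry of $C$ and $\Gamma$ forced by the two Codazzi hypotheses, the conjugation swap identity $g\left(\left(\nabla_{X}J\right)Y,Z\right)=g\left(Y,\left(\nabla_{X}^{\ast }J\right)Z\right)$, and nondegeneracy of $g$. The only difference is the order of operations (you apply the swap identity first and then (\ref{GD8}), while the paper does the reverse), which is immaterial.
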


\begin{proof}
From (\ref{GD8}), we have 
\begin{equation*}
C\left( X,JY,Z\right) =\Gamma \left( X,Y,Z\right) -g\left( X,\left( \nabla
_{Z}J\right) Y\right)
\end{equation*}%
and%
\begin{equation*}
C\left( Z,JY,X\right) =\Gamma \left( Z,Y,X\right) -g\left( Z,\left( \nabla
_{X}J\right) Y\right) .
\end{equation*}%
Since both $\left( \nabla ,g\right) $ and $\left( \nabla ,G\right) $ are
Codazzi-coupled, we can write 
\begin{eqnarray*}
0 &=&g\left( Z,\left( \nabla _{X}J\right) Y\right) -g\left( X,\left( \nabla
_{Z}J\right) Y\right) \\
&=&g\left( Y,\left( \nabla _{X}^{\ast }J\right) Z\right) -g\left( Y,\left(
\nabla _{Z}^{\ast }J\right) X\right) \\
&=&g\left( Y,\left( \nabla _{X}^{\ast }J\right) Z-\left( \nabla _{Z}^{\ast
}J\right) X\right) ,
\end{eqnarray*}%
which gives that $\left( \nabla ^{\ast },J\right) $ is Codazzi-coupled.
\end{proof}

\begin{proposition}
Let $(M,J,g)$ be a metallic pseudo-Riemannian manifold and $G$ be the twin
metallic pseudo-Riemannian metric. Then

1) If both $\left( \nabla ,J\right) $ and $\left( \nabla ^{\ast },J\right) $
are Codazzi-coupled, then we have $\nabla =\nabla ^{\ast }$, where $\nabla
^{\ast }$ is the $g-$conjugate connection of $\nabla $;

2) If both $\left( \nabla ,J\right) $ and $\left( \nabla ^{\dag },J\right) $
are Codazzi-coupled, then we have $\nabla =\nabla ^{\dag }$, where $\nabla
^{\dag }$ is the $G-$conjugate connection of $\nabla $.
\end{proposition}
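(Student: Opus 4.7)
The strategy is to show that the difference tensor $K := \nabla^{\ast }-\nabla $, viewed as a $(1,2)$-tensor field, vanishes identically. For this, I plan to extract two independent algebraic constraints on $K$ from the hypotheses and then combine them with the metallic relation $J^{2}=pJ+qI$.

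First, from the defining relation of the $g$-conjugate connection, $Zg(X,Y)=g(\nabla _{Z}X,Y)+g(X,\nabla _{Z}^{\ast }Y)$, I would rewrite $(\nabla _{Z}g)(X,Y)=-g(X,K_{Z}Y)$. Since $(\nabla _{Z}g)$ is symmetric in its first two slots, this immediately gives that each $K_{Z}$ is $g$-self-adjoint: $g(K_{Z}X,Y)=g(X,K_{Z}Y)$. Second, I would derive the auxiliary identity
\begin{equation*}
g((\nabla _{Z}J)X,Y)=g(X,(\nabla _{Z}^{\ast }J)Y),
\end{equation*}
by expanding $Zg(JX,Y)=Zg(X,JY)$ in two ways using both halves of the $g$-conjugacy relation together with the $g$-self-adjointness of $J$, and subtracting. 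Feeding the Codazzi conditions for $(\nabla ,J)$ and $(\nabla ^{\ast },J)$ into this identity (first pairing $(\nabla _{X}J)Y=(\nabla _{Y}J)X$ with $Z$ under $g$, then applying Codazzi for $\nabla ^{\ast }$) forces $(\nabla _{Z}^{\ast }J)$ to be $g$-self-adjoint; plugging this back into the auxiliary identity yields $\nabla _{Z}J=\nabla _{Z}^{\ast }J$, which unwinds to $K_{Z}J=JK_{Z}$.

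The remaining task is to conclude $K_{Z}=0$ from the two conditions that $K_{Z}$ is $g$-self-adjoint and commutes with $J$. For this I would return to the raw difference of the two Codazzi conditions, namely $K_{X}(JY)-JK_{X}Y=K_{Y}(JX)-JK_{Y}X$, pair it with an auxiliary vector $Z$ under $g$, swap arguments using the $g$-self-adjointness of $K_{Z}$, and then exploit the quadratic relation $J^{2}=pJ+qI$ (equivalently, $J^{-1}=\frac{1}{q}(J-pI)$) to isolate $K$. I expect this to be the main obstacle of the proof: the two derived conditions on $K_{Z}$ are strictly weaker than vanishing in isolation, so the specific algebraic form of the metallic relation has to enter essentially here.

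For part 2, the argument is strictly parallel. Since $(M,J,G)$ is itself a metallic pseudo-Riemannian manifold (by the earlier proposition), one repeats the three-step plan verbatim with $g$ replaced by $G$ and $\nabla ^{\ast }$ replaced by $\nabla ^{\dag }$, the key ingredients being $G(JX,Y)=G(X,JY)$ and the defining relation $ZG(X,Y)=G(\nabla _{Z}X,Y)+G(X,\nabla _{Z}^{\dag }Y)$.
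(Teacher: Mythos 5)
Your first three steps (the $g$-self-adjointness of $K_{Z}:=\nabla _{Z}^{\ast }-\nabla _{Z}$, the adjoint identity $g\left( \left( \nabla _{Z}J\right) X,Y\right) =g\left( X,\left( \nabla _{Z}^{\ast }J\right) Y\right) $, and the conclusion $\nabla J=\nabla ^{\ast }J$, i.e.\ $K_{Z}J=JK_{Z}$) reproduce the paper's entire proof: the paper derives exactly the chain $0=g\left( \left( \nabla _{X}J\right) Y-\left( \nabla _{Y}J\right) X,Z\right) =g\left( Y,\left( \nabla _{Z}^{\ast }J\right) X-\left( \nabla _{Z}J\right) X\right) $ and then simply asserts that $\nabla ^{\ast }J=\nabla J$ ``implies'' $\nabla =\nabla ^{\ast }$, with no further argument. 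So the step you correctly single out as the main obstacle is precisely the step the paper omits.

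Unfortunately that step is not merely hard but impossible as you have set it up: once $K_{Z}$ commutes with $J$, the difference of the two Codazzi conditions, $K_{X}\left( JY\right) -JK_{X}Y=K_{Y}\left( JX\right) -JK_{Y}X$, reads $0=0$ and yields nothing, and the two surviving constraints ($K_{Z}$ is $g$-self-adjoint and commutes with $J$) are genuinely weaker than $K=0$ no matter how one feeds in $J^{2}=pJ+qI$. Indeed $K_{Z}=\lambda \left( Z\right) I$ satisfies both for any $1$-form $\lambda $, and this is realized: on $\mathbb{R}^{2}$ with the flat metric, a constant $g$-symmetric $J$ solving $J^{2}=pJ+qI$, and $\nabla _{X}Y=D_{X}Y+\lambda \left( X\right) Y$ ($D$ the flat connection, $\lambda \neq 0$), one computes $\nabla g=-2\lambda \otimes g$, hence $\nabla ^{\ast }=\nabla -2\lambda \otimes I\neq \nabla $, while $\nabla J=\nabla ^{\ast }J=0$, so both pairs are Codazzi-coupled. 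Thus the proposition as stated fails without an additional hypothesis, and the honest conclusion of both your argument and the paper's is only $\nabla J=\nabla ^{\ast }J$; no completion of your step four can exist.
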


\begin{proof}
Here we will prove only \textit{1)}. Similarly, \textit{2)} can be proven.
Assume that both $\left( \nabla ,J\right) $ and $\left( \nabla ^{\ast
},J\right) $ are Codazzi-coupled. From (\ref{GD1}), it is easy to see that%
\begin{equation*}
g\left( \left( \nabla _{X}J\right) Y,Z\right) )=g\left( Y,\left( \nabla
_{X}^{\ast }J\right) Z\right) .
\end{equation*}%
Thus, we can write 
\begin{eqnarray*}
0 &=&g\left( \left( \nabla _{X}J\right) Y-\left( \nabla _{Y}J\right)
X,Z\right) \\
&=&g\left( Y,\left( \nabla _{Z}^{\ast }J\right) X\right) -g\left( Y,\left(
\nabla _{Z}J\right) X\right) \\
&=&g\left( Y,\left( \nabla _{Z}^{\ast }J\right) X-\left( \nabla _{Z}J\right)
X\right)
\end{eqnarray*}%
that implies $\nabla ^{\ast }J=\nabla J$.
\end{proof}

\begin{proposition}
\label{Proposition4}Let $(M,J,g)$ be a metallic pseudo-Riemannian manifold
and $G$ be the twin metallic pseudo-Riemannian metric. If $\nabla $ is a $J-$%
connection, then both $\nabla ^{\ast }$ and $\nabla ^{\dag }$ are $J-$%
connections.
\end{proposition}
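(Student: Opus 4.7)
The plan is to reduce both claims to the same adjoint-style computation, exploiting that $J$ is symmetric with respect to both $g$ (by hypothesis that $(M,J,g)$ is a metallic pseudo-Riemannian manifold) and $G$ (by item $2$ of the preceding proposition), and then to invoke non-degeneracy of the appropriate metric.

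For $\nabla^{\ast}$, I would test $(\nabla_{Z}^{\ast}J)Y$ against an arbitrary $X$ through $g$ and show $g\!\left(X,(\nabla_{Z}^{\ast}J)Y\right)=0$. Starting from $g(X,\nabla_{Z}^{\ast}(JY)) = Zg(X,JY) - g(\nabla_{Z}X,JY)$ via (\ref{GD1}), one slides $J$ to the other slot using $g$-symmetry, then commutes $\nabla_{Z}$ past $J$ using the hypothesis $\nabla J=0$, and finally reassembles using (\ref{GD1}) again. The two halves should collapse to $g(X,J\nabla_{Z}^{\ast}Y)$, so subtraction yields $g(X,(\nabla_{Z}^{\ast}J)Y)=0$. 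Since $g$ is non-degenerate, this forces $\nabla^{\ast}J=0$.

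For $\nabla^{\dagger}$, I would replay precisely the same argument with $G$ replacing $g$ and (\ref{GD2}) replacing (\ref{GD1}). The $G$-symmetry $G(JX,Y)=G(X,JY)$ needed to move $J$ across $G$ is item $2$ of the proposition above, and the commutation $\nabla_{Z}(JX)=J\nabla_{Z}X$ is again just $\nabla J=0$. The computation yields $G(X,(\nabla_{Z}^{\dagger}J)Y)=0$ for every $X$, which forces $\nabla^{\dagger}J=0$ provided $G$ is non-degenerate.

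The only substantive point to verify is this last non-degeneracy. It follows from invertibility of $J$: since $q\neq 0$ and $J^{-1}=\tfrac{1}{q}(J-pI)$, the map $X\mapsto JX$ is a bundle isomorphism, so $G(X,Y)=g(JX,Y)$ inherits non-degeneracy from $g$. Apart from keeping track of this invertibility hypothesis on the metallic structure, I do not anticipate any real obstacle; the remainder is formal symbol pushing.
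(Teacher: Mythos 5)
Your proposal is correct and follows essentially the same route as the paper: test $(\nabla^{\ast}J)$ (resp.\ $(\nabla^{\dag}J)$) against an arbitrary vector field via $g$ (resp.\ $G$), use the defining relation (\ref{GD1}) (resp.\ (\ref{GD2})) together with the $g$- (resp.\ $G$-) symmetry of $J$ and the hypothesis $\nabla J=0$ to collapse the expression to zero, and conclude by non-degeneracy. Your explicit check that $G$ is non-degenerate (via invertibility of $J$, $q\neq 0$) is a detail the paper leaves implicit when it says the $\nabla^{\dag}$ case is ``similar,'' but it does not change the argument.
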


\begin{proof}
Assume that $\nabla $ is a $J-$connection. Then, we have%
\begin{eqnarray*}
g\left( Y,\nabla _{X}^{\ast }JZ-J\nabla _{X}^{\ast }Z\right) &=&g\left(
Y,\nabla _{X}^{\ast }JZ\right) -g\left( Y,J\nabla _{X}^{\ast }Z\right) \\
&=&X\left( g\left( Y,JZ\right) \right) -g\left( \nabla _{X}Y,JZ\right)
-g\left( Y,J\nabla _{X}^{\ast }Z\right) \\
&=&X\left( g\left( Y,JZ\right) \right) -g\left( \nabla _{X}Y,JZ\right)
-X\left( g\left( JY,Z\right) \right) +g\left( \nabla _{X}JY,Z\right) \\
&=&0
\end{eqnarray*}%
such that $\nabla _{X}^{\ast }JZ=J\nabla _{X}^{\ast }Z$. Similarly, using
the twin metallic pseudo-Riemannian metric $G$, it can be proven that $%
\nabla ^{\dag }$ is a $J-$connection.
\end{proof}

\begin{proposition}
\label{Proposition3}Let $(M,J,g)$ be a metallic pseudo-Riemannian manifold, $%
G$ be the twin metallic pseudo-Riemannian metric, and $\nabla ^{\ast }$ and $%
\nabla ^{\dag }$ respectively denote the $g-$conjugate and $G-$conjugate
connection of a linear connection $\nabla $. If $\nabla $ is a $J-$%
connection, then%
\begin{eqnarray}
C\left( X,JY,Z\right) &=&-C^{\ast }\left( X,JY,Z\right) =-C^{\dag }\left(
X,JY,Z\right)  \label{GD9} \\
&=&\Gamma \left( X,Y,Z\right) =-\Gamma ^{\dag }\left( X,Y,Z\right) =-\Gamma
^{\ast }\left( X,Y,Z\right) ,  \notag
\end{eqnarray}%
where $C^{\dag }=\nabla ^{\dag }g$ and $\Gamma ^{\ast }=\nabla ^{\ast }G$.
\end{proposition}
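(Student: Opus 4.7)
The plan is to observe that every equality in (\ref{GD9}) is a one-line consequence once three ingredients are combined: the relation (\ref{GD8}) between $\Gamma$ and $C$, the conjugation identities (\ref{GD6}) and (\ref{GD7}), and Proposition \ref{Proposition4}, which guarantees that the $g$-conjugate and $G$-conjugate of a $J$-connection are again $J$-connections.

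First I would substitute the hypothesis $\nabla J=0$ directly into (\ref{GD8}). The term $g(X,(\nabla_{Z}J)Y)$ vanishes, so (\ref{GD8}) collapses to $\Gamma(X,Y,Z)=C(X,JY,Z)$, which is the central equality binding the two sides of (\ref{GD9}) together.

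Next I would read off $-C^{\ast }(X,JY,Z)=C(X,JY,Z)$ from (\ref{GD6}) and $-\Gamma ^{\dag }(X,Y,Z)=\Gamma (X,Y,Z)$ from (\ref{GD7}); no further work is needed for those two slots. To handle the remaining $C^{\dag }=\nabla ^{\dag }g$ and $\Gamma ^{\ast }=\nabla ^{\ast }G$ slots, I would invoke Proposition \ref{Proposition4} to conclude that $\nabla ^{\ast }J=0$ and $\nabla ^{\dag }J=0$. Applying the analogue of (\ref{GD8}) with $\nabla $ replaced by $\nabla ^{\dag }$ (respectively $\nabla ^{\ast }$) and using that the corresponding covariant derivative of $J$ is zero, I obtain $\Gamma ^{\dag }(X,Y,Z)=C^{\dag }(X,JY,Z)$ and $\Gamma ^{\ast }(X,Y,Z)=C^{\ast }(X,JY,Z)$. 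Chaining these with $C^{\ast }=-C$, $\Gamma ^{\dag }=-\Gamma $, and the central equality $\Gamma =C(\cdot ,J\cdot ,\cdot )$ closes the loop and delivers all six terms of (\ref{GD9}) as equal up to the stated signs.

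The only point that deserves a sanity check is that the derivation (\ref{GD8}) was written down for the given connection $\nabla $; I would verify in one line that it uses nothing beyond the Leibniz rule applied to $G(X,Y)=g(JX,Y)$, so the identity $\Gamma ^{\nabla ^{\prime }}(X,Y,Z)=C^{\nabla ^{\prime }}(X,JY,Z)+g(X,(\nabla _{Z}^{\prime }J)Y)$ is in fact connection-agnostic and applies verbatim to $\nabla ^{\ast }$ and $\nabla ^{\dag }$. Beyond this routine check, there is no real obstacle: the proposition is essentially a bookkeeping corollary of Proposition \ref{Proposition4} together with the three displayed formulas (\ref{GD6})--(\ref{GD8}).
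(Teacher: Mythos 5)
Your proof is correct and follows essentially the same route as the paper: both arguments get $C(X,JY,Z)=-C^{\ast}(X,JY,Z)$, $\Gamma=-\Gamma^{\dag}$ and $\Gamma(X,Y,Z)=C(X,JY,Z)$ immediately from (\ref{GD6}), (\ref{GD7}) and (\ref{GD8}) with $\nabla J=0$, and both ultimately rely on Proposition \ref{Proposition4} for the remaining two slots. The only difference is cosmetic: the paper expands the definitions of $\Gamma^{\ast}$ and $C^{\dag}$ directly (using $\nabla^{\ast}J=0$ and $\nabla^{\dag}J=0$ implicitly), whereas you reuse the connection-agnostic form of (\ref{GD8}) for $\nabla^{\ast}$ and $\nabla^{\dag}$ -- a slightly more modular bookkeeping whose one prerequisite (that (\ref{GD8}) needs only the Leibniz rule and the $g$-symmetry of $J$) you correctly verify.
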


\begin{proof}
Suppose that $\nabla $ is a $J-$connection. From (\ref{GD6}), (\ref{GD7})
and (\ref{GD9}), we have 
\begin{eqnarray*}
C\left( X,JY,Z\right) &=&-C^{\ast }\left( X,JY,Z\right) \\
\Gamma \left( X,Y,Z\right) &=&-\Gamma ^{\dag }\left( X,Y,Z\right)
\end{eqnarray*}%
and%
\begin{equation*}
C\left( X,JY,Z\right) =\Gamma \left( X,Y,Z\right) .
\end{equation*}%
Moreover, since%
\begin{eqnarray*}
\Gamma ^{\ast }\left( X,Y,Z\right) &\equiv &\left( \nabla _{Z}^{\ast
}G\right) \left( X,Y\right) =ZG\left( X,Y\right) -G\left( \nabla _{Z}^{\ast
}X,Y\right) -G\left( X,\nabla _{Z}^{\ast }Y\right) \\
&=&Zg\left( JX,Y\right) -g\left( \nabla _{Z}^{\ast }X,JY\right) -g\left(
JX,\nabla _{Z}^{\ast }Y\right) \\
&=&g\left( J\nabla _{Z}Y,X\right) -g\left( X,J\nabla _{Z}^{\ast }Y\right) \\
&=&-C\left( X,JY,Z\right)
\end{eqnarray*}%
and%
\begin{eqnarray*}
C^{\dag }\left( X,JY,Z\right) &\equiv &\left( \nabla _{Z}^{\dag }g\right)
\left( X,JY\right) =Zg\left( X,JY\right) -g\left( \nabla _{Z}^{\dag
}X,JY\right) -g\left( JX,\nabla _{Z}^{\dag }Y\right) \\
&=&ZG\left( X,Y\right) -G\left( \nabla _{Z}^{\dag }X,Y\right) -G\left(
X,\nabla _{Z}^{\dag }Y\right) \\
&=&\Gamma ^{\dag }\left( X,Y,Z\right) ,
\end{eqnarray*}%
we obtain (\ref{GD9}) which ends the proof.
\end{proof}

As a direct result of Proposition \ref{Proposition3}, we have the following
proposition.

\begin{proposition}
Let $(M,J,g)$ be a metallic pseudo-Riemannian manifold, $G$ be the twin
metallic pseudo-Riemannian metric, and $\nabla ^{\ast }$ and $\nabla ^{\dag
} $ respectively denote the $g-$conjugate and $G-$conjugate connection of a
linear connection $\nabla $. Under the assumption that $\nabla $ is a $J-$%
connection, If at least one of the pairs $\left( \nabla ,g\right) ,$ $\left(
\nabla ^{\ast },g\right) ,$ $\left( \nabla ^{\dag },g\right) ,$ $\left(
\nabla ,G\right) ,$ $\left( \nabla ^{\ast },G\right) $ and $\left( \nabla
^{\dag },G\right) $ is Codazzi-coupled, then all the remaining pairs are
Codazzi-coupled.
\end{proposition}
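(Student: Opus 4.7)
The plan is to argue that all six Codazzi-couplings are equivalent reformulations of a single symmetry condition, exploiting the identities already supplied by Proposition \ref{Proposition3}. First I would recall the standard observation that for a symmetric $(0,2)$-tensor $\rho$, the pair $(\nabla,\rho)$ is Codazzi-coupled if and only if the $(0,3)$-tensor $\nabla\rho$ is totally symmetric in its three arguments (since it is already symmetric in its first two slots by symmetry of $\rho$, this reduces to cyclic symmetry). So the six Codazzi conditions become assertions of total symmetry for $C$, $C^{\ast}$, $C^{\dag}$, $\Gamma$, $\Gamma^{\ast}$, and $\Gamma^{\dag}$, respectively.

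From (\ref{GD6}) and (\ref{GD7}) we already have $C^{\ast}=-C$ and $\Gamma^{\dag}=-\Gamma$, so those two cases are trivially equivalent to the corresponding statements for $C$ and $\Gamma$. Next I would invoke Proposition \ref{Proposition3}: the chain (\ref{GD9}) gives $C^{\dag}(X,JY,Z)=-C(X,JY,Z)$ for arbitrary $Y$, and invertibility of $J$ upgrades this to the pointwise identity $C^{\dag}=-C$; in the same way $\Gamma^{\ast}=-\Gamma$. At this point the three conditions involving $g$ collapse to ``$C$ is totally symmetric'' and the three involving $G$ collapse to ``$\Gamma$ is totally symmetric'', and only one bridging equivalence remains.

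For this last step I would exploit the identity $\Gamma(X,Y,Z)=C(X,JY,Z)$ from Proposition \ref{Proposition3} together with the short auxiliary identity $C(JX,Y,Z)=C(X,JY,Z)$, which follows in one line from $\nabla J=0$ and the $g$-symmetry of $J$. Assuming $C$ is totally symmetric, the chain
\[
\Gamma(X,Y,Z)=C(X,JY,Z)=C(JY,Z,X)=C(Y,JZ,X)=\Gamma(Y,Z,X),
\]
where the second equality is cyclic symmetry of $C$ and the third is the $J$-twist identity, delivers cyclic symmetry of $\Gamma$; combined with its symmetry in the first two slots (inherited from $G$), this yields total symmetry. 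The reverse implication is obtained by substituting $J^{-1}Y$ for $Y$ and reusing the same $J$-twist. The only mildly delicate piece of the argument is this $J$-twist bookkeeping; everything else is immediate from Proposition \ref{Proposition3} together with (\ref{GD6}) and (\ref{GD7}).
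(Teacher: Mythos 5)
Your proposal is correct and follows exactly the route the paper intends: the paper states this result as ``a direct result of Proposition~\ref{Proposition3}'' with no written proof, and your argument is precisely the natural filling-in of that claim, reducing all six Codazzi conditions to total symmetry of the corresponding $(0,3)$-tensors, collapsing them via (\ref{GD6}), (\ref{GD7}) and the chain (\ref{GD9}), and bridging $C$ and $\Gamma$ with the $J$-twist identity $C(JX,Y,Z)=C(X,JY,Z)$, which is the paper's Proposition~\ref{Proposition6}. No gaps; the invertibility of $J$ needed to upgrade $C^{\dag}(X,JY,Z)=-C(X,JY,Z)$ to $C^{\dag}=-C$ is guaranteed by the standing assumption that $J$ is a bundle isomorphism.
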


\begin{proposition}
\label{Proposition6} Let $(M,J,g)$ be a metallic pseudo-Riemannian manifold, 
$G$ be the twin metallic pseudo-Riemannian metric, and $\nabla ^{\ast }$ and 
$\nabla ^{\dag }$ respectively denote the $g-$conjugate and $G-$conjugate
connection of a linear connection $\nabla $. If $\nabla $ is a $J-$%
connection, then%
\begin{equation*}
\begin{array}{cc}
C\left( JX,Y,Z\right) =C\left( X,JY,Z\right) , & \Gamma \left( JX,Y,Z\right)
=\Gamma \left( X,JY,Z\right) , \\ 
C^{\ast }\left( JX,Y,Z\right) =C^{\ast }\left( X,JY,Z\right) , & \Gamma
^{\ast }\left( JX,Y,Z\right) =\Gamma ^{\ast }\left( X,JY,Z\right) , \\ 
C^{\dag }\left( JX,Y,Z\right) =C^{\dag }\left( X,JY,Z\right) , & \Gamma
^{\dag }\left( JX,Y,Z\right) =\Gamma ^{\dag }\left( X,JY,Z\right) .%
\end{array}%
\end{equation*}
\end{proposition}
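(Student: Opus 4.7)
The plan is to exploit two facts simultaneously: (i) the metric compatibility with $J$ in the sense that both $g$ and $G$ are $J$-symmetric (i.e., $g(JX,Y)=g(X,JY)$ and $G(JX,Y)=G(X,JY)$, the latter from part $2)$ of the first proposition in Section 3), and (ii) Proposition \ref{Proposition4}, which guarantees that whenever $\nabla$ is a $J$-connection, so are $\nabla^{\ast}$ and $\nabla^{\dag}$. With these in hand, each of the six identities reduces to the same one-line manipulation, so it suffices to prove one representative case carefully and observe that the rest follow by relabeling.

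\medskip

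For the representative case, I would expand $C(JX,Y,Z)$ from its definition \eqref{GD3}:
\begin{equation*}
C(JX,Y,Z)=Zg(JX,Y)-g(\nabla_{Z}JX,Y)-g(JX,\nabla_{Z}Y).
\end{equation*}
In the first term I use the $J$-symmetry $g(JX,Y)=g(X,JY)$ to rewrite $Zg(JX,Y)=Zg(X,JY)$. In the second term I use $\nabla J=0$ to push $J$ out, getting $g(\nabla_{Z}JX,Y)=g(J\nabla_{Z}X,Y)=g(\nabla_{Z}X,JY)$. In the third term I again use $\nabla J=0$ and $J$-symmetry to replace $g(JX,\nabla_{Z}Y)$ by $g(X,J\nabla_{Z}Y)=g(X,\nabla_{Z}JY)$. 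Reassembling, the right-hand side is exactly $(\nabla_{Z}g)(X,JY)=C(X,JY,Z)$.

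\medskip

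For the $C^{\ast}$ and $C^{\dag}$ identities I would repeat the same three moves, but with $\nabla$ replaced by $\nabla^{\ast}$ and $\nabla^{\dag}$ respectively; this is legitimate precisely because Proposition \ref{Proposition4} tells us $\nabla^{\ast}J=0$ and $\nabla^{\dag}J=0$, so the ``push $J$ past the connection'' step remains valid. For the three $\Gamma$-identities the very same template applies after substituting $G$ for $g$: the key inputs are the $J$-symmetry of $G$ and the fact that $\nabla, \nabla^{\ast}, \nabla^{\dag}$ all annihilate $J$.

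\medskip

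There is no substantial obstacle here; the proposition is a structural bookkeeping statement, and the whole argument is the repeated application of the identity ``$J$-symmetry of the metric $+$ $J$-parallelism of the connection $\Rightarrow$ $J$ can be shifted from the first slot to the second slot inside $\nabla(\cdot)$.'' The only point where care is needed is to invoke Proposition \ref{Proposition4} explicitly before applying the template to $\nabla^{\ast}$ and $\nabla^{\dag}$; without it, one would not know that those conjugate connections satisfy $\nabla^{\ast}J=\nabla^{\dag}J=0$.
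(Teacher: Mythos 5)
Your argument is correct and coincides with the paper's own proof: both expand $C(JX,Y,Z)$ (resp.\ $\Gamma$) from the definition, use the $J$-symmetry of $g$ (resp.\ $G$) together with $\nabla J=0$ to shift $J$ from the first slot to the second, and then invoke Proposition~\ref{Proposition4} to transfer the same computation to $\nabla^{\ast}$ and $\nabla^{\dag}$. Nothing is missing.
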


\begin{proof}
If $\nabla $ is a $J-$connection, then 
\begin{eqnarray*}
C\left( JX,Y,Z\right) &=&Zg\left( JX,Y\right) -g\left( \nabla
_{Z}JX,Y\right) -g\left( JX,\nabla _{Z}Y\right) \\
&=&Zg\left( X,JY\right) -g\left( \nabla _{Z}X,JY\right) -g\left( X,\nabla
_{Z}JY\right) \\
&=&C\left( X,JY,Z\right)
\end{eqnarray*}%
and%
\begin{eqnarray*}
\Gamma \left( JX,Y,Z\right) &=&ZG\left( JX,Y\right) -G\left( \nabla
_{Z}JX,Y\right) -G\left( JX,\nabla _{Z}Y\right) \\
&=&ZG\left( X,JY\right) -G\left( \nabla _{Z}X,JY\right) -G\left( X,\nabla
_{Z}JY\right) \\
&=&\Gamma \left( X,JY,Z\right) .
\end{eqnarray*}%
With help of Proposition \ref{Proposition4}, the other equalities can be
obtained in a similar way.
\end{proof}

Next, let us introduce a tensor operator which is applied to pure tensor
fields. We refer to \cite{Salimov2} for details about pure tensor fields and
tensor operators. One of the most important classes of tensor operators is
the class of Tachibana operators associated with a considering fixed $(1,1)-$%
tensor field. The Tachibana operators were firstly defined and studied by
Tachibana in \cite{Tachiban}. Lather, the Tachibana operators was applied in
Norden geometry and in theory of lifts in \cite%
{Salimov1,Salimov2,Salimov3,Salimov4,Salimov5}. In here, we aims to give a
new characterization of locally metallic pseudo-Riemannian manifolds by
means of the Tachibana operator.

\begin{proposition}
\label{Proposition5}Let $(M,J,g)$ be a metallic pseudo-Riemannian manifold
and $\nabla $ be a torsion-free connection. Then 
\begin{equation*}
\left( \Phi _{J}g\right) \left( X,Y,Z\right) =C\left( Y,Z,JX\right) -\Gamma
\left( Y,Z,X\right) +g\left( \left( \nabla _{Y}J\right) X,Z\right) +g\left(
\left( \nabla _{Z}J\right) X,Y\right) ,
\end{equation*}%
where $\Phi _{J}g:T_{2}^{0}\left( M\right) \rightarrow T_{3}^{0}\left(
M\right) $ is the Tachibana operator applied to the pseudo-Riemannian metric 
$g$.
\end{proposition}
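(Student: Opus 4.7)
The plan is a direct bookkeeping argument: expand both sides starting from the definition of the Tachibana operator given in Proposition \ref{Proposition7}, and match terms. Since $\nabla$ is torsion-free, I would first rewrite the Lie derivative terms appearing in
\[
(\Phi_J g)(X,Y,Z) = JX\,g(Y,Z) - X\,g(JY,Z) + g((L_Y J)X, Z) + g(Y, (L_Z J)X)
\]
by replacing each bracket $[A,B]$ with $\nabla_A B - \nabla_B A$. Using the product rule $\nabla_A(JB) = (\nabla_A J)B + J\nabla_A B$, one quickly obtains the torsion-free identity
\[
(L_A J)B = (\nabla_A J)B - \nabla_{JB} A + J\nabla_B A,
\]
which already isolates the derivative terms $(\nabla_Y J)X$ and $(\nabla_Z J)X$ that appear on the right-hand side of the proposition.

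Second, I would expand the two tensor fields $C$ and $\Gamma$ using (\ref{GD3}) and (\ref{GD5}):
\[
C(Y,Z,JX) = JX\,g(Y,Z) - g(\nabla_{JX}Y, Z) - g(Y, \nabla_{JX}Z),
\]
\[
\Gamma(Y,Z,X) = X\,g(JY,Z) - g(J\nabla_X Y, Z) - g(JY, \nabla_X Z).
\]
The scalar differentiation terms $JX\,g(Y,Z)$ and $-X\,g(JY,Z)$ in the Tachibana formula are then matched directly against the first terms of $C(Y,Z,JX)$ and $-\Gamma(Y,Z,X)$ respectively.

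Finally, I would compare the remaining connection-valued terms. The terms $-\nabla_{JX}Y$ and $-\nabla_{JX}Z$ coming from the Lie derivative expansion pair with the second and third terms of $C(Y,Z,JX)$, while the cross terms $J\nabla_B A$ in the Lie-derivative identity pair with the terms $-g(J\nabla_X Y, Z)$ and $-g(JY, \nabla_X Z)$ of $-\Gamma(Y,Z,X)$; here I would invoke the $g$-symmetry of $J$, i.e.\ $g(JU,V) = g(U,JV)$, to convert entries as needed (for instance $g(JY, \nabla_X Z) = g(Y, J\nabla_X Z)$). What is left over is precisely $g((\nabla_Y J)X, Z) + g((\nabla_Z J)X, Y)$, completing the identity.

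The main obstacle is purely organizational: keeping track of the eight or so distinct terms and ensuring each sign and each application of $J$-symmetry is correct. There is no conceptual difficulty — no integrability hypothesis, no curvature identity, nothing beyond torsion-freeness, $g$-symmetry of $J$, and the Leibniz rule — so the proof reduces to an expansion and a careful matching of terms.
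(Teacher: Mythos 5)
Your proposal is correct and follows essentially the same route as the paper's own proof: substitute $[A,B]=\nabla_A B-\nabla_B A$ into the Lie derivative terms to get $(L_AJ)B=(\nabla_AJ)B-\nabla_{JB}A+J\nabla_BA$, expand $C(Y,Z,JX)$ and $\Gamma(Y,Z,X)$ from (\ref{GD3}) and (\ref{GD5}), and match terms using the $g$-symmetry of $J$. Nothing is missing.
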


\begin{proof}
Using the definition of the operator $\Phi _{J}g$, we have%
\begin{equation*}
\left( \Phi _{J}g\right) \left( X,Y,Z\right) =JXg\left( Y,Z\right) -X\left(
g\left( JY,Z\right) \right) +g\left( \left( L_{Y}J\right) X,Z\right)
+g\left( Y,\left( L_{Z}J\right) X\right) ,
\end{equation*}%
where $\left( L_{X}J\right) Y=\left[ X,JY\right] -J\left[ X,Y\right] $.
Since $\nabla $ is a torsion-free linear connection, we can rewrite it as%
\begin{eqnarray*}
\left( \Phi _{J}g\right) \left( X,Y,Z\right) &=&JXg\left( Y,Z\right)
-X\left( g\left( JY,Z\right) \right) \\
&&+g\left( \left( \nabla _{Y}J\right) X,Z\right) -g\left( \nabla
_{JX}Y,Z\right) +g\left( J\nabla _{X}Y,Z\right) \\
&&+g\left( Y,\left( \nabla _{Z}J\right) X\right) -g\left( Y,\nabla
_{JX}Z\right) +g\left( Y,J\nabla _{X}Z\right) \\
&=&C\left( Y,Z,JX\right) -\Gamma \left( Y,Z,X\right) +g\left( \left( \nabla
_{Y}J\right) X,Z\right) +g\left( \left( \nabla _{Z}J\right) X,Y\right) ,
\end{eqnarray*}%
where $\left[ X,Y\right] =\nabla _{X}Y-\nabla _{Y}X$.
\end{proof}

A statistical manifold is a (pseudo-) Riemannian manifold $(M,g,\nabla )$
with a (pseudo-) Riemannian metric $g$ and a torsion-free linear connection $%
\nabla $ for which $C=\nabla g$ is totally symmetric, that is, the Codazzi
equation holds \cite{Lauritzen}%
\begin{equation*}
(\nabla _{X}g)(Y,Z)=(\nabla _{Y}g)(X,Z)=(\nabla _{Z}g)(X,Y).
\end{equation*}%
The following theorem gives a new characterization of locally metallic
pseudo-Riemannian manifolds by means of the Tachibana operator and
Codazzi-coupled.

\begin{theorem}
Let $(M,J,g)$ be a metallic pseudo-Riemannian manifold and $\nabla $ be a
torsion-free connection. If $\nabla $ is a $J-$connection and $\left( \nabla
,g\right) $ is Codazzi-coupled, then $\Phi _{J}g=0$. Furthermore, the triple 
$(M,J,g)$ is a locally metallic pseudo-Riemannian manifold.
\end{theorem}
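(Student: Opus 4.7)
The plan is to feed Proposition \ref{Proposition5} with both hypotheses and reduce the vanishing of $\Phi_J g$ to a single symmetry identity for $C = \nabla g$, then invoke Proposition \ref{Proposition7} for the ``furthermore'' clause. Since $\nabla J = 0$, the correction terms $g((\nabla_Y J)X, Z)$ and $g((\nabla_Z J)X, Y)$ in Proposition \ref{Proposition5} vanish outright, and identity (\ref{GD8}) degenerates to $\Gamma(Y, Z, X) = C(Y, JZ, X)$. Substituting these into Proposition \ref{Proposition5} collapses everything to the requirement
\[
C(Y, Z, JX) = C(Y, JZ, X).
\]

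This reduction step is the only real obstacle. Proposition \ref{Proposition6} lets me transfer $J$ between the first two slots of $C$, but says nothing directly about the third (connection) slot, while the two occurrences of $J$ in the desired identity sit in the third and second slots respectively. Here the Codazzi hypothesis is decisive: $(\nabla_X g)(Y, Z) = (\nabla_Z g)(X, Y)$ combined with the symmetry of $g$ promotes $C$ to a totally symmetric $(0,3)$-tensor, so cyclic permutations of its three arguments are free. A short chain --- cyclically rotate $JX$ from the third slot into the first, swap with $J$ into the second via Proposition \ref{Proposition6}, rotate once more, and apply Proposition \ref{Proposition6} a second time --- transports $C(Y, Z, JX)$ to $C(Y, JZ, X)$. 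This yields $\Phi_J g = 0$.

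For the second assertion, I simply invoke Proposition \ref{Proposition7}: the vanishing of $\Phi_J g$ is equivalent to $\nabla^{g} J = 0$, which is precisely the definition of $(M, J, g)$ being locally metallic pseudo-Riemannian. Note that Proposition \ref{Proposition7} pertains to the Levi-Civita connection $\nabla^{g}$ rather than the particular torsion-free $J$-connection $\nabla$ supplied in the hypothesis, so this last conclusion is a statement about $g$ alone and makes no further use of the auxiliary $\nabla$. The main bookkeeping risk is tracking the argument permutations in the cyclic/Proposition \ref{Proposition6} chain, but no genuine further computation is required.
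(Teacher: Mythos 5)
Your proposal is correct and follows essentially the same route as the paper: reduce via Proposition \ref{Proposition5}, $\nabla J=0$ and (\ref{GD8}) to the identity $C(Y,Z,JX)=C(Y,JZ,X)$, then combine the total symmetry of $C$ (forced by the Codazzi coupling of $(\nabla,g)$ together with the symmetry of $g$) with Proposition \ref{Proposition6}, and finish with Proposition \ref{Proposition7}. Your permutation chain uses two applications of Proposition \ref{Proposition6} where the paper uses one, but both chains are valid and the substance is identical; your remark that Proposition \ref{Proposition7} concerns the Levi-Civita connection rather than the auxiliary $\nabla$ is a correct and worthwhile clarification.
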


\begin{proof}
From Proposition \ref{Proposition5} and $\nabla J=0$, we have%
\begin{equation*}
\left( \Phi _{J}g\right) \left( X,Y,Z\right) =C\left( Y,Z,JX\right) -C\left(
Y,JZ,X\right) .
\end{equation*}%
Since $\left( \nabla ,g\right) $ is Codazzi-coupled, from Proposition \ref%
{Proposition6}, we get the following%
\begin{eqnarray*}
\left( \Phi _{J}g\right) \left( X,Y,Z\right) &=&C\left( JX,Z,Y\right)
-C\left( X,JZ,Y\right) \\
&=&0.
\end{eqnarray*}%
Because of Proposition \ref{Proposition7}, we can say that the triple $%
(M,J,g)$ is a locally metallic pseudo-Riemannian manifold.
\end{proof}

In \cite{Salimov6}, for a non-integrable almost paracomplex manifold $(M,P)$
(that is, $P^{2}=I$, where $I=identity$) with compatible metric $g$, authors
give the following 
\begin{eqnarray*}
&&\left( \Phi _{P}g\right) \left( X,Y,Z\right) +\left( \Phi _{P}g\right)
\left( Y,Z,X\right) +\left( \Phi _{P}g\right) \left( Z,X,Y\right) \\
&=&g\left( \left( \nabla _{X}^{g}P\right) Y,Z\right) +g\left( \left( \nabla
_{Y}^{g}P\right) Z,X\right) +g\left( \left( \nabla _{Z}^{g}P\right)
X,Y\right) ,
\end{eqnarray*}%
where $\nabla ^{g}$ is the Levi-Civita connection of $g$. If $\left( \Phi
_{P}g\right) \left( X,Y,Z\right) +\left( \Phi _{P}g\right) \left(
Y,Z,X\right) +\left( \Phi _{P}g\right) \left( Z,X,Y\right) =g\left( \left(
\nabla _{X}^{g}P\right) Y,Z\right) +g\left( \left( \nabla _{Y}^{g}P\right)
Z,X\right) +g\left( \left( \nabla _{Z}^{g}P\right) X,Y\right) =0$, they
called the triple $(M,P,g)$ a quasi (para-)K\"{a}hler manifold which is
analogue of quasi K\"{a}hler manifold in \cite{Manev}.

Analogously, if a non-integrable metallic pseudo Riemannian manifold $%
(M,J,g) $ satisfies%
\begin{equation*}
\left( \Phi _{J}g\right) \left( X,Y,Z\right) +\left( \Phi _{J}g\right)
\left( Y,Z,X\right) +\left( \Phi _{J}g\right) \left( Z,X,Y\right) =0,
\end{equation*}%
then we are calling the triple $(M,J,g)$ a quasi metallic pseudo-Riemannian
manifold.

\begin{proposition}
Let $(M,J,g)$ be a non-integrable metallic pseudo-Riemannian manifold and $%
\nabla $ be a torsion-free connection. If $\left( \nabla ,g\right) $ is
Codazzi-coupled, then 
\begin{eqnarray*}
&&\left( \Phi _{J}g\right) \left( X,Y,Z\right) +\left( \Phi _{J}g\right)
\left( Y,Z,X\right) +\left( \Phi _{J}g\right) \left( Z,X,Y\right) \\
&=&g\left( \left( \nabla _{X}J\right) Y,Z\right) +g\left( \left( \nabla
_{Y}J\right) Z,X\right) +g\left( \left( \nabla _{Z}J\right) X,Y\right) .
\end{eqnarray*}
\end{proposition}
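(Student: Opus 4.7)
My plan is to apply Proposition~\ref{Proposition5} to each of the three cyclic permutations $(X,Y,Z)$, $(Y,Z,X)$ and $(Z,X,Y)$ and add the resulting identities. The left-hand side of the sum is precisely the cyclic sum of $\Phi_J g$ that the statement evaluates; the right-hand side assembles into a cyclic sum of $C(\cdot,\cdot,J\cdot)$-terms, a cyclic sum of $-\Gamma(\cdot,\cdot,\cdot)$-terms, and six correction terms of the form $g((\nabla_{\cdot} J)\cdot,\cdot)$.

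Next I would eliminate the $\Gamma$-terms using (\ref{GD8}), that is $\Gamma(X,Y,Z)=C(X,JY,Z)+g(X,(\nabla_Z J)Y)$. This substitution produces three additional $C$-terms (with $J$ now inserted in the middle slot) and three further terms which, after invoking the symmetry of $g$, take the form $-g((\nabla_\cdot J)\cdot,\cdot)$. The crucial observation is then that, under the Codazzi-coupled hypothesis on $(\nabla,g)$, the tensor $C=\nabla g$ is totally symmetric: symmetry in the first two slots comes from $g$, and the Codazzi equation $(\nabla_X g)(Y,Z)=(\nabla_Y g)(X,Z)$ gives $C(Y,Z,X)=C(X,Z,Y)$. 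Hence $C$ depends only on the unordered multiset of its three arguments.

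Given this, a short accounting shows that the three original $C$-terms $C(Y,Z,JX)$, $C(Z,X,JY)$, $C(X,Y,JZ)$ pair off against the three $C$-terms $C(Z,JX,Y)$, $C(X,JY,Z)$, $C(Y,JZ,X)$ produced from the $\Gamma$-block, each matched pair sharing the same three-argument multiset, so all $C$-contributions cancel. What then remains is a linear combination of nine $g((\nabla_\cdot J)\cdot,\cdot)$-terms, six of which cancel in pairs by the symmetry of $g$, leaving exactly $g((\nabla_X J)Y,Z)+g((\nabla_Y J)Z,X)+g((\nabla_Z J)X,Y)$, as required. The main (and essentially only) conceptual step is the recognition that Codazzi coupling promotes $C$ to a fully symmetric $(0,3)$-tensor; once that is in hand, the rest of the argument is routine bookkeeping.
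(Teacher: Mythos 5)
Your proposal is correct and follows essentially the same route as the paper: take the cyclic sum of the identity in Proposition~\ref{Proposition5}, expand the $\Gamma$-terms via (\ref{GD8}), and use the total symmetry of $C=\nabla g$ granted by the Codazzi coupling to cancel the $C$-contributions, after which the $g\left(\left(\nabla_{\cdot}J\right)\cdot,\cdot\right)$-terms reduce to the stated cyclic sum. The paper's own proof is merely terser in the final bookkeeping; your accounting of the cancellations is accurate.
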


\begin{proof}
From Proposition \ref{Proposition5}, we know that 
\begin{equation*}
\left( \Phi _{J}g\right) \left( X,Y,Z\right) =C\left( Y,Z,JX\right) -\Gamma
\left( Y,Z,X\right) +g\left( \left( \nabla _{Y}J\right) X,Z\right) +g\left(
\left( \nabla _{Z}J\right) X,Y\right) .
\end{equation*}%
If $\left( \nabla ,g\right) $ is Codazzi-coupled, then 
\begin{eqnarray*}
&&\left( \Phi _{J}g\right) \left( X,Y,Z\right) +\left( \Phi _{J}g\right)
\left( Y,Z,X\right) +\left( \Phi _{J}g\right) \left( Z,X,Y\right) \\
&=&C\left( JX,Z,Y\right) +C\left( JY,X,Z\right) +C\left( JZ,Y,X\right) \\
&&-\left( \Gamma \left( Y,Z,X\right) +\Gamma \left( Z,X,Y\right) +\Gamma
\left( X,Y,Z\right) \right) \\
&&+g\left( \left( \nabla _{Y}J\right) X,Z\right) +g\left( \left( \nabla
_{Z}J\right) X,Y\right) +g\left( \left( \nabla _{Z}J\right) Y,X\right)
+g\left( \left( \nabla _{X}J\right) Y,Z\right) \\
&&+g\left( \left( \nabla _{X}J\right) Z,Y\right) +g\left( \left( \nabla
_{Y}J\right) Z,X\right)
\end{eqnarray*}%
from which we can write the following 
\begin{eqnarray*}
&&\left( \Phi _{J}g\right) \left( X,Y,Z\right) +\left( \Phi _{J}g\right)
\left( Y,Z,X\right) +\left( \Phi _{J}g\right) \left( Z,X,Y\right) \\
&=&g\left( \left( \nabla _{X}J\right) Y,Z\right) +g\left( \left( \nabla
_{Y}J\right) Z,X\right) +g\left( \left( \nabla _{Z}J\right) X,Y\right) .
\end{eqnarray*}
\end{proof}

By means of the above proposition, we get the following result.

\begin{corollary}
Let $(M,J,g)$ be a non-integrable metallic pseudo-Riemannian manifold and $%
\nabla $ be a torsion-free connection. Under the assumption that $\left(
\nabla ,g\right) $ is Codazzi-coupled, the triple $(M,J,g)$ is a quasi
metallic pseudo Riemannian manifold if and only if $g\left( \left( \nabla
_{X}J\right) Y,Z\right) +g\left( \left( \nabla _{Y}J\right) Z,X\right)
+g\left( \left( \nabla _{Z}J\right) X,Y\right) =0$.
\end{corollary}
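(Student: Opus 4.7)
The plan is to read the corollary as a one-line consequence of the immediately preceding proposition. That proposition already does all of the substantive work: under precisely the present hypotheses --- $\nabla$ torsion-free and $(\nabla,g)$ Codazzi-coupled on a non-integrable metallic pseudo-Riemannian manifold --- it establishes the identity
\[
(\Phi_{J} g)(X,Y,Z) + (\Phi_{J} g)(Y,Z,X) + (\Phi_{J} g)(Z,X,Y) = g\bigl((\nabla_{X} J)Y,Z\bigr) + g\bigl((\nabla_{Y} J)Z,X\bigr) + g\bigl((\nabla_{Z} J)X,Y\bigr).
\]

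Next I would quote the definition introduced just above the corollary: the triple $(M,J,g)$ is a quasi metallic pseudo-Riemannian manifold exactly when the cyclic sum on the left-hand side vanishes for all vector fields $X,Y,Z$. Reading the displayed equality between these two cyclic sums in both directions immediately gives the biconditional, since an equality $A=B$ forces the equivalence $A=0 \Leftrightarrow B=0$.

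The only thing to check is that the preceding proposition genuinely applies under the stated hypotheses, and indeed every hypothesis (non-integrability of $J$, torsion-freeness of $\nabla$, Codazzi coupling of $(\nabla,g)$) matches verbatim. There is no genuine obstacle to overcome: the real content --- the Tachibana-operator rewriting in Proposition~\ref{Proposition5} combined with the Codazzi symmetry of $C=\nabla g$ --- was already expended in the previous proposition. The corollary is purely a change of vocabulary, restating that identity in terms of the newly introduced notion of quasi metallic pseudo-Riemannian manifold.
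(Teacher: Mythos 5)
Your proposal is correct and matches the paper's own treatment: the paper states this corollary with no proof beyond the phrase ``By means of the above proposition, we get the following result,'' which is exactly your reading of the equality of the two cyclic sums combined with the definition of a quasi metallic pseudo-Riemannian manifold. Nothing further is needed.
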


\section{Generalized conjugate connections and their applications}

Generalizations of conjugate connections are studied in this section.
Similar problems studied for conjugate connections in the previous section
will be searched for generalized connections.

\begin{definition}
Let $(M,J,g)$ be a metallic pseudo-Riemannian manifold and $\nabla $ be a
linear connection on $M$. The generalized conjugate connection $\nabla
^{\ast \prime }$ of $\nabla $ with respect to $g$ by a $1-$form $\tau $ is
defined by \cite{Matsuzoe,Nomizu,Zhang}%
\begin{equation*}
Xg\left( Y,Z\right) =g\left( \nabla _{X}Y,Z\right) +g\left( Y,\nabla
_{X}^{\ast \prime }Z\right) -\tau \left( X\right) g\left( Y,Z\right) .
\end{equation*}
\end{definition}

It can easily be checked that the generalized conjugate connection $\nabla
^{\ast \prime }$ of $\nabla $ with respect to $g$ is involutive, that is, $%
\left( \nabla ^{\ast \prime }\right) ^{\ast \prime }=\nabla $.

\begin{proposition}
Let $(M,J,g)$ be a metallic pseudo-Riemannian manifold and $\nabla $ be a
linear connection on $M$. If $\nabla ^{\ast \prime }$is a generalized
conjugate connection of $\nabla $ with respect to $g$ by $\tau $, then we
have%
\begin{equation*}
\nabla \text{ is a }J-\text{connection if and only if }\nabla ^{\ast \prime }%
\text{ is a }J-\text{connection.}
\end{equation*}
\end{proposition}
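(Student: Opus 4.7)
The plan is to show that the two conditions $\nabla J=0$ and $\nabla^{\ast\prime}J=0$ are both equivalent to a single pointwise identity relating $(\nabla_{X}J)Y$ and $(\nabla_{X}^{\ast\prime}J)Z$ via the metric $g$. Concretely, I expect to prove the identity
\begin{equation*}
g\bigl(Y,(\nabla_{X}^{\ast\prime}J)Z\bigr)=g\bigl((\nabla_{X}J)Y,Z\bigr),
\end{equation*}
from which the equivalence is immediate by non-degeneracy of $g$: if $\nabla J=0$ the right side vanishes for all $Y$, forcing $(\nabla_{X}^{\ast\prime}J)Z=0$, and the converse is symmetric.

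To derive this identity, I would expand $g(Y,(\nabla_{X}^{\ast\prime}J)Z)=g(Y,\nabla_{X}^{\ast\prime}JZ)-g(Y,J\nabla_{X}^{\ast\prime}Z)$ and treat the two summands by applying the defining relation
\begin{equation*}
Xg(Y,Z)=g(\nabla_{X}Y,Z)+g(Y,\nabla_{X}^{\ast\prime}Z)-\tau(X)g(Y,Z)
\end{equation*}
twice. For the first summand, I would replace $Z$ by $JZ$, solving for $g(Y,\nabla_{X}^{\ast\prime}JZ)$. For the second summand, I would first use that $J$ is $g$-symmetric to rewrite $g(Y,J\nabla_{X}^{\ast\prime}Z)=g(JY,\nabla_{X}^{\ast\prime}Z)$, and then apply the defining relation with $Y$ replaced by $JY$ to solve for $g(JY,\nabla_{X}^{\ast\prime}Z)$. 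Subtracting the two expressions gives everything in terms of $X$-derivatives of $g(Y,JZ)$ and $g(JY,Z)$ together with Christoffel-type terms $g(\nabla_{X}Y,JZ)$, $g(\nabla_{X}JY,Z)$, and the two $\tau$-terms $\tau(X)g(Y,JZ)$ and $\tau(X)g(JY,Z)$.

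The key simplification, which is the only place something could go wrong, is that the $g$-symmetry of the metallic structure $J$ forces $g(Y,JZ)=g(JY,Z)$ pointwise, so the two $\tau$-contributions cancel exactly, and likewise the two derivative terms $Xg(Y,JZ)$ and $Xg(JY,Z)$ cancel. What remains collapses to $g(\nabla_{X}JY-J\nabla_{X}Y,Z)=g((\nabla_{X}J)Y,Z)$, yielding the desired identity. This cancellation of the $\tau$-dependence is really the whole content of the proposition: although $\nabla^{\ast\prime}$ depends on the auxiliary $1$-form, its interaction with $J$ through $g$ does not, precisely because $J$ is $g$-symmetric.

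Once the identity is established, the equivalence follows formally: assuming $\nabla J=0$ makes the right-hand side vanish for all $Y,Z$, and since $g$ is non-degenerate this gives $\nabla_{X}^{\ast\prime}J=0$. Conversely, assuming $\nabla^{\ast\prime}J=0$ and using that $(\nabla^{\ast\prime})^{\ast\prime}=\nabla$ (already noted after the definition), the same identity applied to $\nabla^{\ast\prime}$ in place of $\nabla$ yields $\nabla J=0$. I do not anticipate any genuine obstacle; the argument is a direct computation, and the only subtlety is making sure the $g$-symmetry of $J$ is invoked at the right place so the $\tau$-terms drop out.
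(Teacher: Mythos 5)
Your proposal is correct and follows essentially the same route as the paper: both apply the defining relation of $\nabla^{\ast\prime}$ twice (once with $JZ$ in place of $Z$, once with $JY$ in place of $Y$) and use the $g$-symmetry of $J$ to cancel the $\tau$-terms and the derivative terms. The only difference is organizational: you isolate the hypothesis-free identity $g(Y,(\nabla_{X}^{\ast\prime}J)Z)=g((\nabla_{X}J)Y,Z)$ and get both implications at once by non-degeneracy (making the appeal to $(\nabla^{\ast\prime})^{\ast\prime}=\nabla$ unnecessary), whereas the paper runs the same computation twice, once in each direction.
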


\begin{proof}
Suppose that $\nabla $ is a $J-$connection. Using the definition of the
generalized conjugate connection, it is possible to write the following%
\begin{eqnarray*}
g\left( Y,\nabla _{X}^{\ast \prime }JZ\right) &=&Xg\left( Y,JZ\right)
-g\left( \nabla _{X}Y,JZ\right) +\tau \left( X\right) g\left( Y,JZ\right) \\
&=&g\left( \nabla _{X}JY,Z\right) +g\left( JY,\nabla _{X}^{\ast \prime
}Z\right) -g\left( \nabla _{X}Y,JZ\right) \\
&=&g\left( Y,J\nabla _{X}^{\ast \prime }Z\right)
\end{eqnarray*}%
such that $\nabla _{X}^{\ast \prime }JZ=J\nabla _{X}^{\ast \prime }Z$.
Conversely, let $\nabla ^{\ast \prime }$ be a $J-$connection. Then%
\begin{eqnarray*}
g\left( Y,\nabla _{X}JZ\right) &=&Xg\left( Y,JZ\right) -g\left( JZ,\nabla
_{X}^{\ast \prime }Y\right) +\tau \left( X\right) g\left( Y,JZ\right) \\
&=&g\left( \nabla _{X}^{\ast \prime }JY,Z\right) +g\left( JY,\nabla
_{X}Z\right) -g\left( JZ,\nabla _{X}^{\ast \prime }Y\right) \\
&=&g\left( Y,J\nabla _{X}Z\right)
\end{eqnarray*}%
which implies $\nabla _{X}JZ=J\nabla _{X}Z$.
\end{proof}

\begin{proposition}
Let $(M,J,g)$ be a metallic pseudo-Riemannian manifold and $\nabla $ be a
linear connection, and $\nabla ^{\ast \prime }$ and $\nabla ^{\dag \prime }$
denote generalized conjugate connections of $\nabla $ with respect to $g$
and $G$ by $\tau $, respectively. Then%
\begin{equation*}
\text{if }\nabla \text{ is a }J-\text{connection, then }\nabla ^{\ast \prime
}=\nabla ^{\dag \prime }\text{. }
\end{equation*}%
where $\left( \nabla ^{\dag \prime }\right) ^{\dag \prime }=\nabla $.
\end{proposition}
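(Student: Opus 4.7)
The plan is to exploit the relation $G(Y,Z)=g(JY,Z)$ between the twin metric $G$ and the base metric $g$, together with the $J$-connection hypothesis $\nabla_X(JY)=J\nabla_X Y$, to show that $\nabla^{\dag\prime}$ satisfies the defining identity for $\nabla^{\ast\prime}$ (evaluated on $JY$ rather than $Y$). Non-degeneracy of $g$, combined with the invertibility of $J$ (which is available since $q\neq 0$ gives $J^{-1}=\tfrac{1}{q}(J-pI)$), will then upgrade this pointwise identity to equality of the two connections.

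Concretely, I would first write the defining identity for $\nabla^{\dag\prime}$,
\[
XG(Y,Z)=G(\nabla_X Y,Z)+G(Y,\nabla_X^{\dag\prime} Z)-\tau(X)G(Y,Z),
\]
and replace every occurrence of $G$ by the expression $g(J\cdot,\cdot)$. This gives
\[
Xg(JY,Z)=g(J\nabla_X Y,Z)+g(JY,\nabla_X^{\dag\prime} Z)-\tau(X)g(JY,Z).
\]
Here the $J$-connection assumption enters: $J\nabla_X Y=\nabla_X(JY)$, so the right-hand side becomes
\[
g(\nabla_X(JY),Z)+g(JY,\nabla_X^{\dag\prime} Z)-\tau(X)g(JY,Z).
\]
On the other hand, substituting $JY$ in place of $Y$ in the defining identity of $\nabla^{\ast\prime}$ yields exactly the same left-hand side with $\nabla^{\ast\prime}$ in place of $\nabla^{\dag\prime}$. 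Subtracting the two expressions collapses all terms except
\[
g(JY,\nabla_X^{\dag\prime} Z-\nabla_X^{\ast\prime} Z)=0.
\]

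The final step is to eliminate $J$ and $g$. Since $J$ is invertible on the tangent space at every point (because $q\neq 0$), as $Y$ ranges over all vector fields so does $JY$; hence $g(W,\nabla_X^{\dag\prime} Z-\nabla_X^{\ast\prime} Z)=0$ for every $W$, and non-degeneracy of $g$ forces $\nabla_X^{\dag\prime} Z=\nabla_X^{\ast\prime} Z$ for all $X,Z$. The involutivity statement $(\nabla^{\dag\prime})^{\dag\prime}=\nabla$ is a direct check from the definition of the generalized $G$-conjugate, entirely analogous to the $g$-case already noted in the paper; I would record it in one line.

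I do not expect a serious obstacle: the only subtle point is making sure the $J$-connection identity is applied to the correct argument (to $Y$ inside $\nabla_X Y$, not to $JY$), and that the invertibility of $J$ is invoked explicitly so that the conclusion is not restricted to vectors lying in the image of $J$. These are minor bookkeeping matters rather than genuine difficulties.
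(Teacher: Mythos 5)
Your proposal is correct and follows essentially the same route as the paper: rewrite the defining identity of $\nabla^{\dag\prime}$ via $G(Y,Z)=g(JY,Z)$, compare it with the defining identity of $\nabla^{\ast\prime}$ evaluated at $JY$, and conclude from $g\left( JY,\nabla_{X}^{\ast\prime}Z-\nabla_{X}^{\dag\prime}Z\right) =0$ using $\nabla J=0$. Your explicit appeal to the invertibility of $J$ (via $q\neq 0$) and the non-degeneracy of $g$ in the last step is a minor tightening of the paper's "we immediately obtain," not a different argument.
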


\begin{proof}
The definition of the generalized conjugate connection of $\nabla $ with
respect to $G$ by $\tau $ immediately gives%
\begin{eqnarray*}
XG\left( Y,Z\right) &=&G\left( \nabla _{X}Y,Z\right) +G\left( Y,\nabla
_{X}^{\dag \prime }Z\right) -\tau \left( X\right) G\left( Y,Z\right) \\
&=&g\left( J\nabla _{X}Y,Z\right) +g\left( JY,\nabla _{X}^{\dag \prime
}Z\right) -\tau \left( X\right) g\left( JY,Z\right) \\
&=&Xg\left( JY,Z\right) .
\end{eqnarray*}%
Since 
\begin{equation*}
Xg\left( JY,Z\right) =g\left( \nabla _{X}JY,Z\right) +g\left( JY,\nabla
_{X}^{\ast \prime }Z\right) -\tau \left( X\right) g\left( JY,Z\right) ,
\end{equation*}%
we find%
\begin{equation*}
g\left( \nabla _{X}JY-J\nabla _{X}Y,Z\right) +g\left( JY,\nabla _{X}^{\ast
\prime }Z-\nabla _{X}^{\dag \prime }Z\right) =0
\end{equation*}%
from which we immediately obtain that if $\nabla J=0$, then $\nabla ^{\ast
\prime }=\nabla ^{\dag \prime }$.
\end{proof}

\begin{proposition}
Let $(M,J,g)$ be a metallic pseudo-Riemannian manifold and $\nabla ^{\ast
\prime }$ be a generalized conjugate connection of $\nabla $ with respect to 
$g$ by $\tau $. Then $\nabla ^{\ast \prime }$ is a generalized conjugate
connection of $\nabla ^{J}$ with respect to $G$ by $\tau $.
\end{proposition}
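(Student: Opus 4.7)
The plan is a direct unpacking of both defining relations. To show $\nabla^{*\prime}$ is the generalized conjugate of $\nabla^{J}$ with respect to $G$ by $\tau$, I need to verify
\begin{equation*}
XG(Y,Z)=G(\nabla^{J}_{X}Y,Z)+G(Y,\nabla^{\ast\prime}_{X}Z)-\tau(X)G(Y,Z).
\end{equation*}
The natural strategy is to translate the right-hand side back to $g$ via $G(A,B)=g(JA,B)$ and then apply the hypothesis on $\nabla^{\ast\prime}$ with the pair $(JY,Z)$ in place of $(Y,Z)$.

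First, I would use the definition $\nabla^{J}_{X}Y=J^{-1}(\nabla_{X}(JY))$ to obtain the key identity $J\nabla^{J}_{X}Y=\nabla_{X}(JY)$. This immediately yields $G(\nabla^{J}_{X}Y,Z)=g(\nabla_{X}(JY),Z)$. Next, writing $G(Y,\nabla^{\ast\prime}_{X}Z)=g(JY,\nabla^{\ast\prime}_{X}Z)$ and $\tau(X)G(Y,Z)=\tau(X)g(JY,Z)$, the right-hand side collapses to
\begin{equation*}
g(\nabla_{X}(JY),Z)+g(JY,\nabla^{\ast\prime}_{X}Z)-\tau(X)g(JY,Z),
\end{equation*}
which by the defining relation of $\nabla^{\ast\prime}$ (applied with $JY$ substituted for $Y$) equals $Xg(JY,Z)=XG(Y,Z)$, closing the computation.

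The only point that requires a moment of care is that the substitution $Y\mapsto JY$ in the defining equation of $\nabla^{\ast\prime}$ is legitimate because the identity $Xg(Y,Z)=g(\nabla_{X}Y,Z)+g(Y,\nabla^{\ast\prime}_{X}Z)-\tau(X)g(Y,Z)$ is tensorial in $Y$ (both sides are $C^{\infty}(M)$-linear in $Y$), so it holds for the vector field $JY$ as well. Beyond this observation, the argument is a one-line chain of equalities, so I do not anticipate any real obstacle; the proof is essentially a symbolic manipulation driven by the $g$-symmetry of $J$ (which makes $G$ well-defined and symmetric) together with the cancellation $J\circ J^{-1}=I$ built into the definition of $\nabla^{J}$.
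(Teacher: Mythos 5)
Your proposal is correct and is essentially the paper's own argument: both reduce the $G$-relation to the defining $g$-relation for $\nabla^{\ast\prime}$ applied to the pair $(JY,Z)$, using $J\nabla^{J}_{X}Y=\nabla_{X}(JY)$ and $G(\cdot,\cdot)=g(J\cdot,\cdot)$. The paper merely organizes the same chain of equalities starting from $G(\nabla^{J}_{X}Y,Z)$ rather than from the full right-hand side.
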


\begin{proof}
We have%
\begin{eqnarray*}
G\left( \nabla _{X}^{J}Y,Z\right) &=&G\left( J^{-1}\nabla _{X}JY,Z\right) \\
&=&g\left( \nabla _{X}JY,Z\right) \\
&=&Xg\left( JY,Z\right) -g\left( JY,\nabla _{X}^{\ast \prime }Z\right) +\tau
\left( X\right) g\left( JY,Z\right) \\
&=&XG\left( Y,Z\right) -G\left( Y,\nabla _{X}^{\ast \prime }Z\right) +\tau
\left( X\right) G\left( Y,Z\right) .
\end{eqnarray*}
\end{proof}

Now, we consider another generalized connection which is called
dual-projectively equivalent connection.

\begin{definition}
\cite{Zhang} Let $(M,J,g)$ be a metallic pseudo-Riemannian manifold and $%
\nabla $ and $\nabla ^{\prime }$ be linear connections on $M$. We say that $%
\nabla $ and $\nabla ^{\prime }$ are dual-projectively equivalent if there
exists a $1$-form $\tau $ such that%
\begin{equation*}
\nabla _{X}^{\prime }Y=\nabla _{X}Y-g\left( X,Y\right) \tau ^{\sharp },
\end{equation*}%
where $g\left( X,\tau ^{\sharp }\right) =\tau \left( X\right) $.
\end{definition}

The following proposition ends this section.

\begin{proposition}
If $\left( \nabla ,J\right) $ is Codazzi-coupled, then each of the pairs $%
(\nabla ^{\prime },J),$ $(\nabla ^{\prime },J^{-1}),$ $(\left( \nabla
^{J}\right) ^{\prime },J)$ and $(\left( \nabla ^{J}\right) ^{\prime
},J^{-1}) $ is Codazzi-coupled.
\end{proposition}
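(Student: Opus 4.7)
The plan is to reduce the four claims to two by observing that, for any linear connection $D$ on $M$, Codazzi-coupling with $J$ is equivalent to Codazzi-coupling with $J^{-1}$. Indeed, from $J^{2}=pJ+qI$ we obtain $J^{-1}=\frac{1}{q}(J-pI)$, hence for every $X,Y$
\[
(D_{X}J^{-1})Y-(D_{Y}J^{-1})X=\frac{1}{q}\bigl((D_{X}J)Y-(D_{Y}J)X\bigr),
\]
since $D I=0$ for any linear connection. Because $J^{-1}$ is $g$-symmetric whenever $J$ is (this was already recorded in the proposition listing $(M,J^{-1},g)$ as a metallic pseudo-Riemannian manifold), the problem reduces to showing that $(\nabla',J)$ and $((\nabla^{J})',J)$ are Codazzi-coupled.

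For the pair $(\nabla',J)$, I would substitute $\nabla'_{X}Y=\nabla_{X}Y-g(X,Y)\tau^{\sharp}$ directly into $(\nabla'_{X}J)Y-(\nabla'_{Y}J)X$ and expand using the Leibniz rule. The computation yields the principal term $(\nabla_{X}J)Y-(\nabla_{Y}J)X$ together with two correction terms of the form $\bigl(g(Y,JX)-g(X,JY)\bigr)\tau^{\sharp}$ and $\bigl(g(X,Y)-g(Y,X)\bigr)J\tau^{\sharp}$. The principal term vanishes by hypothesis; the first correction vanishes by the $g$-symmetry of $J$, and the second by the symmetry of $g$. These are precisely the two compatibilities built into the metallic pseudo-Riemannian setting.

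For the pair $((\nabla^{J})',J)$, the approach is to first invoke Proposition \ref{Proposition1}, which guarantees that $(\nabla^{J},J)$ is Codazzi-coupled whenever $(\nabla,J)$ is. Then I would repeat the computation of the previous paragraph with $\nabla$ replaced by $\nabla^{J}$ and $\nabla'$ replaced by $(\nabla^{J})'$: the very same two symmetries of $g$ cause the analogous correction terms to cancel, and the principal term vanishes by the Codazzi-coupling just established for $(\nabla^{J},J)$. Combining with the $J \leftrightarrow J^{-1}$ reduction from the first paragraph delivers all four claims.

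There is no substantive obstacle here: the proof is one reduction plus one direct expansion plus one appeal to Proposition \ref{Proposition1}. The only delicate point is the clean cancellation of the two $\tau^{\sharp}$-terms, which depends precisely on $g(X,Y)=g(Y,X)$ and on the $g$-symmetry of $J$; without the latter, the dual-projective transformation would fail to preserve Codazzi-coupling with $J$.
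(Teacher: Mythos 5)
Your proof is correct, and its central computation --- expanding $(\nabla _{X}^{\prime }J)Y-(\nabla _{Y}^{\prime }J)X$ via $\nabla _{X}^{\prime }Y=\nabla _{X}Y-g(X,Y)\tau ^{\sharp }$ and cancelling the two correction terms $\bigl(g(Y,JX)-g(X,JY)\bigr)\tau ^{\sharp }$ and $\bigl(g(X,Y)-g(Y,X)\bigr)J\tau ^{\sharp }$ by the $g$-symmetry of $J$ and the symmetry of $g$ --- is exactly the paper's argument. The paper handles the remaining three pairs with a bare ``similarly,'' while your reduction of $J^{-1}$-coupling to $J$-coupling through $J^{-1}=\frac{1}{q}(J-pI)$ and your appeal to Proposition \ref{Proposition1} for the $\nabla ^{J}$ cases merely make that step explicit, so this is the same approach in a tidier organization.
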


\begin{proof}
Assume that the pair $\left( \nabla ,J\right) $ is Codazzi-coupled. Then we
have%
\begin{eqnarray*}
\left( \nabla _{X}^{\prime }J\right) Y-\left( \nabla _{Y}^{\prime }J\right)
X &=&\nabla _{X}^{\prime }JY-J\nabla _{X}^{\prime }Y-\nabla _{Y}^{\prime
}JX+J\nabla _{Y}^{\prime }X \\
&=&\nabla _{X}JY-g\left( X,JY\right) \tau ^{\sharp }-\nabla _{Y}JX+g\left(
Y,JX\right) \tau ^{\sharp } \\
&&-J\left( \nabla _{X}Y-g\left( X,Y\right) \tau ^{\sharp }-\nabla
_{Y}X+g\left( Y,X\right) \tau ^{\sharp }\right) \\
&=&\left( \nabla _{X}J\right) Y-\left( \nabla _{Y}J\right) X \\
&=&0.
\end{eqnarray*}%
Similarly, it can be easily shown that the pairs $(\nabla ^{\prime },J^{-1})$
$,$ $(\left( \nabla ^{J}\right) ^{\prime },J)$ and $(\left( \nabla
^{J}\right) ^{\prime },J^{-1})$ are Codazzi-coupled too.
\end{proof}

\section{Meatallic-like pseudo-Riemannian manifolds}

In this section we shall present some results concerning with a
metallic-like pseudo-Riemannian manifold.

\begin{definition}
\cite{Takano}\label{Defi1} Let $\left( M,g\right) $ be a pseudo-Riemannian
manifold with a metallic structure $J$ which has another $\left( 1,1\right)
- $tensor field $J^{\ast }$ satisfying%
\begin{equation*}
g\left( JX,Y\right) =g\left( X,J^{\ast }Y\right) ,
\end{equation*}%
where $\left( J^{\ast }\right) ^{\ast }=J$. Then $(M,J,g)$ is called a
metallic-like pseudo-Riemannian manifold.
\end{definition}

Standard calculations immediately give the following proposition.

\begin{proposition}
\label{Proposition8}Let $(M,J,g)$ be a metallic-like pseudo-Riemannian
manifold. Then, the following properties hold

$1)\ \left( J^{\ast }\right) ^{2}=pJ^{\ast }+qI,$

$2)\ \left( J^{\ast }\right) ^{-1}=\frac{1}{q}J^{\ast }-\frac{p}{q}I,$

$3)\ g\left( J^{-1}X,Y\right) =g\left( X,\left( J^{\ast }\right)
^{-1}Y\right) ,$

$4)\ g\left( JX,J^{\ast }Y\right) =pg\left( X,J^{\ast }Y\right) +qg\left(
X,Y\right) ,$

$5)\ g\left( \left( \nabla _{X}J^{\ast }\right) Y,Z\right) =g\left( Y,\left(
\nabla _{X}^{\ast }J\right) Z\right) ,$

$6)\ g\left( \left( \nabla _{X}^{\ast }J^{\ast }\right) Y,Z\right) =g\left(
Y,\left( \nabla _{X}J\right) Z\right) .$
\end{proposition}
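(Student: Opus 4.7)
The plan is to treat Proposition \ref{Proposition8} as essentially a bookkeeping exercise where the two nontrivial ingredients are the adjoint relation $g(JX,Y)=g(X,J^{\ast}Y)$ with $(J^{\ast})^{\ast}=J$, and the metallic equation $J^{2}=pJ+qI$. Parts (1)--(4) are purely algebraic and will be pushed through by matching both sides of the adjoint pairing. For (1), I would compute $g(J^{2}X,Y)$ in two ways: on one hand, it equals $g(X,(J^{\ast})^{2}Y)$ by applying the adjoint relation twice; on the other hand, $J^{2}=pJ+qI$ gives $g(J^{2}X,Y)=p\,g(X,J^{\ast}Y)+q\,g(X,Y)=g(X,(pJ^{\ast}+qI)Y)$. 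Nondegeneracy of $g$ then forces $(J^{\ast})^{2}=pJ^{\ast}+qI$. From this, (2) is immediate solving for the inverse just as for $J$. For (3), I would write $J^{-1}=\tfrac{1}{q}(J-pI)$, apply adjointness term-by-term, and recognize $\tfrac{1}{q}(J^{\ast}-pI)=(J^{\ast})^{-1}$ by (2). For (4), expanding $g(JX,J^{\ast}Y)=g(J^{2}X,Y)$ by the metallic relation yields the claim directly.

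The core work lies in (5) and (6), where the $g$-conjugate connection $\nabla^{\ast}$ enters through $Xg(A,B)=g(\nabla_{X}A,B)+g(A,\nabla_{X}^{\ast}B)$ (recalling $(\nabla^{\ast})^{\ast}=\nabla$, so the roles of $\nabla$ and $\nabla^{\ast}$ on the two slots can be swapped). For (5), I would start by expanding
\[
g((\nabla_{X}J^{\ast})Y,Z)=g(\nabla_{X}(J^{\ast}Y),Z)-g(J^{\ast}\nabla_{X}Y,Z).
\]
The first term I rewrite via the conjugate relation and the adjoint identity $g(J^{\ast}Y,W)=g(Y,JW)$ (valid since $(J^{\ast})^{\ast}=J$), obtaining $Xg(Y,JZ)-g(Y,J\nabla_{X}^{\ast}Z)$; the second, by adjointness, becomes $g(\nabla_{X}Y,JZ)$. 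Combining and recognizing $Xg(Y,JZ)-g(\nabla_{X}Y,JZ)=g(Y,\nabla_{X}^{\ast}(JZ))$ collapses everything to $g(Y,\nabla_{X}^{\ast}(JZ)-J\nabla_{X}^{\ast}Z)=g(Y,(\nabla_{X}^{\ast}J)Z)$. Part (6) is the mirror of this argument: expand $(\nabla_{X}^{\ast}J^{\ast})Y$ and use the swapped form of the conjugate relation $Xg(A,B)=g(\nabla_{X}^{\ast}A,B)+g(A,\nabla_{X}B)$ to move the star across.

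The only place I expect to have to be careful is keeping track of which connection acts on which slot in (5) and (6). Since $\nabla^{\ast}$ is defined by a duality identity rather than by a closed formula, a single sign error or a slipped star would derail the cancellation; the key observation keeping the computation honest is that the two adjoint identities $g(JX,Y)=g(X,J^{\ast}Y)$ and $g(\nabla_{X}A,B)+g(A,\nabla_{X}^{\ast}B)=Xg(A,B)$ are each symmetric under the involutions $J\leftrightarrow J^{\ast}$ and $\nabla\leftrightarrow\nabla^{\ast}$, so (6) follows from (5) by simultaneously swapping both pairs rather than by a separate computation.
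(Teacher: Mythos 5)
Your argument is correct, and it is precisely the ``standard calculation'' the paper invokes: the paper states this proposition with no proof at all, and parts (1)--(5) of your computation go through verbatim --- (1) by evaluating $g(J^{2}X,Y)=g(X,(J^{\ast})^{2}Y)$ against the metallic relation and using nondegeneracy of $g$, (3) and (4) by term-by-term adjointness, and (5) by the expansion you give, which collapses to $g(Y,\nabla^{\ast}_{X}(JZ)-J\nabla^{\ast}_{X}Z)$. The one thing to repair is your closing remark about (6): identity (5) is \emph{self}-dual under the simultaneous swap $J\leftrightarrow J^{\ast}$, $\nabla\leftrightarrow\nabla^{\ast}$ --- applying both swaps to (5) and using the symmetry of $g$ merely reproduces (5) with $Y$ and $Z$ relabelled, because ``is the $g$-adjoint of'' is an involutive relation --- so swapping both pairs does not yield (6). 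What does yield (6) from (5) in one stroke is swapping \emph{only} the connections $\nabla\leftrightarrow\nabla^{\ast}$: this is a legitimate symmetry since $(\nabla^{\ast})^{\ast}=\nabla$ makes the conjugacy identity symmetric in the two connections while leaving the adjointness axiom for $J$ untouched. Alternatively, the direct mirror computation you sketch in your second paragraph (expanding $(\nabla^{\ast}_{X}J^{\ast})Y$ via the swapped conjugacy relation) is perfectly adequate. Either repair is one line, so this is a slip in the bookkeeping of the symmetry rather than a gap in the proof.
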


\begin{proposition}
\label{Proposition9} Let $(M,J,g)$ be a metallic-like pseudo-Riemannian
manifold. Then we have

$1)\ \nabla $ is a $J^{\ast }-$connection if and only if $\nabla ^{\ast }$
is a $J-$connection.

$2)\ \nabla $ is a $J-$connection if and only if $\nabla ^{\ast }$ is a $%
J^{\ast }-$connection.
\end{proposition}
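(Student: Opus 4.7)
The plan is to use items (5) and (6) of Proposition \ref{Proposition8} together with the non-degeneracy of the pseudo-Riemannian metric $g$. Both statements of the proposition will then reduce to a straightforward pointwise argument, with no further computation needed.

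First I would handle part (1). Starting from the identity
\[
g\bigl((\nabla_X J^{\ast})Y,Z\bigr)=g\bigl(Y,(\nabla_X^{\ast}J)Z\bigr)
\]
of Proposition \ref{Proposition8}(5), I observe that $\nabla$ being a $J^{\ast}$-connection is by definition the vanishing of $(\nabla_X J^{\ast})Y$ for all $X,Y$, hence equivalent to the vanishing of the left-hand side for all $X,Y,Z$. By the identity, this is equivalent to the vanishing of $g\bigl(Y,(\nabla_X^{\ast}J)Z\bigr)$ for all $X,Y,Z$. Since $g$ is non-degenerate, this forces $(\nabla_X^{\ast}J)Z=0$ for all $X,Z$, which is precisely the statement that $\nabla^{\ast}$ is a $J$-connection. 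Reversing each step gives the converse.

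Part (2) is entirely analogous, but uses Proposition \ref{Proposition8}(6) in place of (5): the identity
\[
g\bigl((\nabla_X^{\ast}J^{\ast})Y,Z\bigr)=g\bigl(Y,(\nabla_X J)Z\bigr)
\]
together with non-degeneracy of $g$ makes $\nabla J=0$ and $\nabla^{\ast}J^{\ast}=0$ equivalent.

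There is no genuine obstacle here; the real content is already packaged in Proposition \ref{Proposition8}. The only point one needs to be careful about is that the equivalences in both directions use the involutivity $(\nabla^{\ast})^{\ast}=\nabla$ implicitly (so that a statement about $\nabla^{\ast}$ translates back to $\nabla$), but this was already established in Section 1 right after equation \eqref{GD1} and is automatic.
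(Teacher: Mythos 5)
Your proof is correct and is essentially the paper's own argument in more economical form: the paper's proof of part (1) carries out, starting from the defining relation (\ref{GD1}) of $\nabla^{\ast}$, precisely the computation that establishes item (5) of Proposition \ref{Proposition8}, whereas you simply cite that identity (and item (6) for part (2)) and conclude by non-degeneracy of $g$. Nothing is lost in doing so; your closing remark about the involutivity $(\nabla^{\ast})^{\ast}=\nabla$ is not actually needed, since every step in your chain is already a two-way equivalence.
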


\begin{proof}
$1)\ $Assume that $\nabla J^{\ast }=0$. From (\ref{GD1}), we can write%
\begin{eqnarray*}
g\left( Y,\nabla _{X}^{\ast }JZ\right) &=&Xg\left( Y,JZ\right) -g\left(
\nabla _{X}Y,JZ\right) \\
&=&Xg\left( J^{\ast }Y,Z\right) -g\left( \nabla _{X}J^{\ast }Y,Z\right) \\
&=&g\left( J^{\ast }Y,\nabla _{X}^{\ast }Z\right) \\
&=&g\left( Y,J\nabla _{X}^{\ast }Z\right)
\end{eqnarray*}%
such that $\nabla ^{\ast }JZ=J\nabla _{X}^{\ast }Z$. Conversely, if $\nabla
^{\ast }$ is $J-$connection, then 
\begin{eqnarray*}
g\left( Y,\nabla _{X}J^{\ast }Z\right) &=&Xg\left( J^{\ast }Z,Y\right)
-g\left( J^{\ast }Z,\nabla _{X}^{\ast }Y\right) \\
&=&Xg\left( Y,JY\right) -g\left( Z,\nabla ^{\ast }JY\right) \\
&=&g\left( \nabla _{X}Z,JY\right) \\
&=&g\left( Y,J^{\ast }\nabla _{X}Z\right)
\end{eqnarray*}%
which implies $\nabla _{X}J^{\ast }Z=J^{\ast }\nabla _{X}Z$.

$2)$ Similarly, it can be obtained with standard calculations.
\end{proof}

\begin{proposition}
Let $(M,J,g)$ be a metallic-like pseudo-Riemannian manifold.

$1)$ If both $\left( \nabla ,J^{\ast }\right) $ and $\left( \nabla ^{\ast
},J\right) $ are Codazzi-coupled, then $\nabla J^{\ast }=\nabla ^{\ast }J.$

$2)\ $If both $\left( \nabla ^{\ast },J^{\ast }\right) $ and $\left( \nabla
,J\right) $ are Codazzi-coupled, then $\nabla ^{\ast }J^{\ast }=\nabla J.$
\end{proposition}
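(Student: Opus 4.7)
My plan for $(1)$ is to reduce the sought equality $\nabla J^{\ast}=\nabla^{\ast}J$ to a $g$-symmetry statement for $\nabla^{\ast}_{X}J$ via the adjointness identity of Proposition \ref{Proposition8}$(5)$, and then to extract that symmetry from the two Codazzi hypotheses. Pairing $(\nabla_{X}J^{\ast})Y=(\nabla^{\ast}_{X}J)Y$ with an arbitrary $Z$ and invoking item $(5)$, the desired equality is equivalent to
\[
g(Y,(\nabla^{\ast}_{X}J)Z)=g((\nabla^{\ast}_{X}J)Y,Z),
\]
i.e., to the $g$-self-adjointness of the endomorphism $\nabla^{\ast}_{X}J$ for every $X$.

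To obtain that self-adjointness, I would first combine item $(5)$ with the Codazzi condition $(\nabla_{X}J^{\ast})Y=(\nabla_{Y}J^{\ast})X$ to produce the chain
\[
g(Y,(\nabla^{\ast}_{X}J)Z)=g((\nabla_{X}J^{\ast})Y,Z)=g((\nabla_{Y}J^{\ast})X,Z)=g(X,(\nabla^{\ast}_{Y}J)Z),
\]
which swaps the two subscripts at the cost of interchanging $X$ and $Y$. Next I would apply the Codazzi condition $(\nabla^{\ast}_{X}J)Z=(\nabla^{\ast}_{Z}J)X$ and its $Y$-analogue to both sides in order to park the $\nabla^{\ast}$ subscript on $Z$; what remains is exactly $g(Y,(\nabla^{\ast}_{Z}J)X)=g(X,(\nabla^{\ast}_{Z}J)Y)$, the $g$-symmetry I want. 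Reading this back through item $(5)$ then recovers $\nabla J^{\ast}=\nabla^{\ast}J$.

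Part $(2)$ will follow by the same blueprint, using item $(6)$ of Proposition \ref{Proposition8} in place of $(5)$ and interchanging the roles of $(\nabla,J^{\ast}),\,(\nabla^{\ast},J)$ with those of $(\nabla^{\ast},J^{\ast}),\,(\nabla,J)$. The main obstacle I anticipate is purely one of bookkeeping: one has to invoke the two Codazzi symmetries in exactly the right order so that the two subscript swaps they generate compose into a $g$-symmetry in the $(Y,Z)$ arguments rather than cancelling each other out.
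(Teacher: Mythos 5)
Your proposal is correct and follows essentially the same route as the paper: the chain $g(Y,(\nabla^{\ast}_{X}J)Z)=g((\nabla_{X}J^{\ast})Y,Z)=g((\nabla_{Y}J^{\ast})X,Z)=g(X,(\nabla^{\ast}_{Y}J)Z)$ followed by the second Codazzi condition is exactly the paper's computation, merely repackaged as ``reduce to $g$-self-adjointness of $\nabla^{\ast}_{Z}J$.'' The order in which you invoke the two Codazzi symmetries matches the paper's, so no bookkeeping issue arises.
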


\begin{proof}
$1)$ From $(5)$ of Proposition \ref{Proposition8}, we have%
\begin{eqnarray*}
0 &=&g\left( \left( \nabla _{X}J^{\ast }\right) Y-\left( \nabla _{Y}J^{\ast
}\right) X,Z\right) \\
&=&g\left( Y,\left( \nabla _{X}^{\ast }J\right) Z\right) -g\left( X,\left(
\nabla _{Y}^{\ast }J\right) Z\right) \\
&=&g\left( Y,\left( \nabla _{Z}^{\ast }J\right) X\right) -g\left( X,\left(
\nabla _{Z}^{\ast }J\right) Y\right) \\
&=&g\left( X,\left( \nabla _{Z}J^{\ast }\right) Y\right) -g\left( X,\left(
\nabla _{Z}^{\ast }J\right) Y\right) \\
&=&g\left( Y,\left( \nabla _{Z}J^{\ast }\right) Y-\left( \nabla _{Z}^{\ast
}J\right) Y\right)
\end{eqnarray*}%
which implies $\nabla J^{\ast }=\nabla ^{\ast }J$.

$2)$ Similarly, it can be easily obtained.
\end{proof}

\begin{proposition}
\label{Proposition10}On a metallic-like pseudo-Riemannian manifold $(M,J,g)$%
, the following two identities hold%
\begin{equation*}
C\left( JX,Y,Z\right) =C\left( X,J^{\ast }Y,Z\right) +g\left( Y,\left(
\nabla _{Z}^{\ast }J\right) X-\left( \nabla _{Z}J\right) X\right) ,
\end{equation*}%
where $C=\nabla g$, and%
\begin{equation*}
C^{\ast }\left( JX,Y,Z\right) =C^{\ast }\left( X,J^{\ast }Y,Z\right)
+g\left( X,\left( \nabla _{Z}^{\ast }J^{\ast }\right) Y-\left( \nabla
_{Z}J^{\ast }\right) Y\right) ,
\end{equation*}%
where $C^{\ast }=\nabla ^{\ast }g.$
\end{proposition}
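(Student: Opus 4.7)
The plan is to derive each identity by expanding the left-hand side from the definition of $C$ (respectively $C^{\ast}$), applying the metallic-like compatibility $g(JA,B)=g(A,J^{\ast}B)$ to pass $J$ across the pairing, and then invoking part (5) of Proposition \ref{Proposition8} to convert a residual $\nabla J^{\ast}$ term into a $\nabla^{\ast}J$ term.

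For the first identity, I would write
\begin{equation*}
C(JX,Y,Z)=Zg(JX,Y)-g(\nabla_{Z}JX,Y)-g(JX,\nabla_{Z}Y)
\end{equation*}
and immediately rewrite the first and third terms as $Zg(X,J^{\ast}Y)$ and $g(X,J^{\ast}\nabla_{Z}Y)$ via the metallic-like identity. For the middle term I would split $\nabla_{Z}JX=(\nabla_{Z}J)X+J\nabla_{Z}X$, applying $g(JA,B)=g(A,J^{\ast}B)$ again on the second summand. Expanding $C(X,J^{\ast}Y,Z)$ analogously and using $\nabla_{Z}J^{\ast}Y=(\nabla_{Z}J^{\ast})Y+J^{\ast}\nabla_{Z}Y$ makes every transport term cancel, leaving
\begin{equation*}
C(JX,Y,Z)-C(X,J^{\ast}Y,Z)=g(X,(\nabla_{Z}J^{\ast})Y)-g((\nabla_{Z}J)X,Y).
\end{equation*}
Symmetry of $g$ together with Proposition \ref{Proposition8}(5), in the form $g((\nabla_{Z}J^{\ast})Y,X)=g(Y,(\nabla_{Z}^{\ast}J)X)$, converts the first term to $g(Y,(\nabla_{Z}^{\ast}J)X)$, and factoring out $g(Y,\cdot)$ yields the stated right-hand side.

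The second identity follows by repeating the entire calculation with every occurrence of $\nabla$ replaced by $\nabla^{\ast}$, which is permissible because the metallic-like identity $g(JA,B)=g(A,J^{\ast}B)$ is connection-independent. The same cancellations produce
\begin{equation*}
C^{\ast}(JX,Y,Z)-C^{\ast}(X,J^{\ast}Y,Z)=g(X,(\nabla_{Z}^{\ast}J^{\ast})Y)-g((\nabla_{Z}^{\ast}J)X,Y).
\end{equation*}
Then I would apply Proposition \ref{Proposition8}(5) once more, chained with symmetry of $g$, to obtain $g((\nabla_{Z}^{\ast}J)X,Y)=g(Y,(\nabla_{Z}^{\ast}J)X)=g((\nabla_{Z}J^{\ast})Y,X)=g(X,(\nabla_{Z}J^{\ast})Y)$, producing the required expression $g(X,(\nabla_{Z}^{\ast}J^{\ast})Y-(\nabla_{Z}J^{\ast})Y)$. (Equivalently, one can use part (6) of Proposition \ref{Proposition8} at this last step.)

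No step is substantively difficult; the only genuine obstacle is bookkeeping. After each application of $g(JA,B)=g(A,J^{\ast}B)$ one must track which slot now carries $J$ and which carries $J^{\ast}$, and then pick the correct specialisation of the adjoint identities in Proposition \ref{Proposition8} so that the final right-hand side matches exactly the stated differences $(\nabla_{Z}^{\ast}J)X-(\nabla_{Z}J)X$ in the first identity and $(\nabla_{Z}^{\ast}J^{\ast})Y-(\nabla_{Z}J^{\ast})Y$ in the second.
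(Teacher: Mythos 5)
Your proof is correct and follows essentially the same route as the paper: expand $C(JX,Y,Z)$ from the definition, move $J$ across the metric via $g(JA,B)=g(A,J^{\ast}B)$, split $\nabla_{Z}JX$ and $\nabla_{Z}J^{\ast}Y$ into tensorial and transport parts, and finish with Proposition \ref{Proposition8}(5); the second identity is likewise obtained by the same computation with $\nabla^{\ast}$ in place of $\nabla$. The only quibble is your parenthetical: at the last step of the second identity the conversion $g\left(\left(\nabla_{Z}^{\ast}J\right)X,Y\right)=g\left(X,\left(\nabla_{Z}J^{\ast}\right)Y\right)$ is again an instance of part (5), not part (6), but this does not affect the argument.
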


\begin{proof}
From Definition \ref{Defi1}, we get%
\begin{eqnarray*}
C\left( JX,Y,Z\right) &=&Zg\left( JX,Y\right) -g\left( \nabla
_{Z}JX,Y\right) -g\left( JX,\nabla _{Z}Y\right) \\
&=&Zg\left( X,J^{\ast }Y\right) -g\left( \nabla _{Z}X,J^{\ast }Y\right)
-g\left( X,\nabla _{Z}J^{\ast }Y\right) \\
&&+g\left( X,\left( \nabla _{Z}J^{\ast }\right) Y\right) -g\left( \left(
\nabla _{Z}J\right) X,Y\right) \\
&=&C\left( X,J^{\ast }Y,Z\right) +g\left( Y,\left( \nabla _{Z}^{\ast
}J\right) X-\left( \nabla _{Z}J\right) X\right)
\end{eqnarray*}%
and similarly%
\begin{equation*}
C^{\ast }\left( JX,Y,Z\right) =C^{\ast }\left( X,J^{\ast }Y,Z\right)
+g\left( X,\left( \nabla _{Z}^{\ast }J^{\ast }\right) Y-\left( \nabla
_{Z}J^{\ast }\right) Y\right) .
\end{equation*}
\end{proof}

Also note that if $\nabla J=0$ and $\nabla J^{\ast }=0,$ then we obtain 
\begin{equation*}
C\left( JX,Y,Z\right) =C\left( X,J^{\ast }Y,Z\right) =-C^{\ast }\left(
JX,Y,Z\right) =-C^{\ast }\left( X,J^{\ast }Y,Z\right) \text{.}
\end{equation*}

\begin{theorem}
On a metallic-like pseudo-Riemannian manifold $(M,J,g)$, if $\nabla J=0$, $%
\nabla J^{\ast }=0$ and $\left( \nabla ,g\right) $ is Codazzi-coupled, where 
$\nabla $ is a torsion-free connection, then we get%
\begin{equation*}
\left( \Phi _{J^{\ast }}g\right) \left( X,Y,Z\right) +\left( \Phi
_{J}g\right) \left( X,Y,Z\right) =0.
\end{equation*}
\end{theorem}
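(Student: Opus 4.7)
The plan is to imitate the derivation of Proposition \ref{Proposition5} in parallel for $\Phi_J g$ and $\Phi_{J^{\ast}} g$, then add the two and show the result telescopes. Since $J$ is not $g$-symmetric in the metallic-like setting, Proposition \ref{Proposition5} cannot be quoted verbatim: its proof uses $g(Y,J\nabla_X Z)=g(JY,\nabla_X Z)$ at exactly one step. I would redo that derivation, substituting the metallic-like identities $g(JA,B)=g(A,J^{\ast}B)$ and $g(A,JB)=g(J^{\ast}A,B)$ at the critical point. Concretely, after expanding $(L_Y J)X$ and $(L_Z J)X$ via torsion-freeness of $\nabla$, killing the $(\nabla_Y J)X$ and $(\nabla_Z J)X$ contributions by $\nabla J=0$, and re-collecting the remaining first-order terms via the definition of $C=\nabla g$, I expect to arrive at
\begin{equation*}
(\Phi_{J}g)(X,Y,Z)=C(Y,Z,JX)-C(JY,Z,X)+g\bigl((J^{\ast}-J)Y,\nabla_X Z\bigr).
\end{equation*}
The mirror-image computation for $\Phi_{J^{\ast}}g$, using $\nabla J^{\ast}=0$ and $g(Y,J^{\ast}\nabla_X Z)=g(JY,\nabla_X Z)$, produces
\begin{equation*}
(\Phi_{J^{\ast}}g)(X,Y,Z)=C(Y,Z,J^{\ast}X)-C(J^{\ast}Y,Z,X)+g\bigl((J-J^{\ast})Y,\nabla_X Z\bigr),
\end{equation*}
so the two defect terms are negatives of each other and cancel upon summation.

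It then remains to establish
\begin{equation*}
C(Y,Z,JX)-C(JY,Z,X)+C(Y,Z,J^{\ast}X)-C(J^{\ast}Y,Z,X)=0.
\end{equation*}
Since $(\nabla,g)$ is Codazzi-coupled, $C$ is totally symmetric in its three arguments. Moreover, using $\nabla J=0$, $\nabla J^{\ast}=0$, and $g(JA,B)=g(A,J^{\ast}B)$, a direct expansion of $(\nabla_D g)(JA,B)$ yields the exchange identity $C(JA,B,D)=C(A,J^{\ast}B,D)$ and symmetrically $C(J^{\ast}A,B,D)=C(A,JB,D)$. Combining total symmetry with these two identities gives $C(Y,Z,JX)=C(J^{\ast}Y,Z,X)$ and $C(Y,Z,J^{\ast}X)=C(JY,Z,X)$, so the four terms cancel in pairs.

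The main obstacle is pinpointing exactly where the proof of Proposition \ref{Proposition5} implicitly uses the $g$-symmetry of $J$ and verifying that the resulting defect in the metallic-like version has equal and opposite sign for $J$ and $J^{\ast}$; this parity is the crux of the argument, and it is the reason why the sum $\Phi_{J}g+\Phi_{J^{\ast}}g$ (rather than either term separately) vanishes. Once this symmetric structure is recognized, the remaining cancellation of the $C$-terms is a routine consequence of total symmetry of $C$ together with the mutual-adjoint property of the pair $(J,J^{\ast})$.
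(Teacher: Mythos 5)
Your proof is correct and takes essentially the same route as the paper's: expand each Tachibana operator using torsion-freeness into $C$-terms plus a defect term $\pm\, g\left( Y,\left( J-J^{\ast }\right) \nabla _{X}Z\right) $, cancel the $C$-terms via total symmetry of $C$ together with the exchange identity $C\left( JA,B,D\right) =C\left( A,J^{\ast }B,D\right) $ (the remark following Proposition \ref{Proposition10}), and note that the two defect terms are opposite. The only cosmetic difference is that the paper performs the $C$-cancellation within each operator separately, obtaining $\left( \Phi _{J}g\right) \left( X,Y,Z\right) =g\left( Y,\left( J-J^{\ast }\right) \nabla _{X}Z\right) $ and its negative for $J^{\ast }$, whereas you cancel the four $C$-terms across the sum.
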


\begin{proof}
Using the definition of the operator $\Phi _{J}g,$ we have%
\begin{eqnarray*}
\left( \Phi _{J^{\ast }}g\right) \left( X,Y,Z\right) ) &=&J^{\ast }Xg\left(
Y,Z\right) -g\left( \nabla _{J^{\ast }X}Y,Z\right) -g\left( Y,\nabla
_{J^{\ast }X}Z\right) \\
&&-\left( Xg\left( J^{\ast }Y,Z\right) -g\left( J^{\ast }\nabla
_{X}Y,Z\right) -g\left( Y,J^{\ast }\nabla _{X}Z\right) \right) \\
&&+g\left( \left( \nabla _{Y}J^{\ast }\right) X,Z\right) +g\left( Y,\left(
\nabla _{Z}J^{\ast }\right) X\right) \\
&=&C\left( Y,Z,J^{\ast }X\right) -C\left( Y,JZ,X\right) -g\left( Y,\nabla
_{X}JZ\right) +g\left( Y,J^{\ast }\nabla _{X}Z\right) .
\end{eqnarray*}%
From the hypothesis and Proposition \ref{Proposition10}, we get%
\begin{equation*}
\left( \Phi _{J^{\ast }}g\right) \left( X,Y,Z\right) )=g\left( Y,\left(
J^{\ast }-J\right) \nabla _{X}Z\right) .
\end{equation*}%
Similarly, we can calculate%
\begin{equation*}
\left( \Phi _{J}g\right) \left( X,Y,Z\right) =g\left( Y,\left( J-J^{\ast
}\right) \nabla _{X}Z\right) .
\end{equation*}%
When we add the last two equations, we find 
\begin{equation*}
\left( \Phi _{J^{\ast }}g\right) \left( X,Y,Z\right) +\left( \Phi
_{J}g\right) \left( X,Y,Z\right) =0.
\end{equation*}
\end{proof}


\begin{thebibliography}{99}
\bibitem{Alek} D. V. Alekseevsky, V. Cortes and C. Devchand, Special complex
manifolds, J. Geom. Phys. 42(1--2) (2002), 85--105.

\bibitem{Bejan} C. L. Bejan, M. Crasmareanu, Conjugate connections with
respect to a quadratic endomorphism and duality, Filomat 30 (9) (2016),
2367--2374.

\bibitem{Blaga1} M. A. Blaga, A. Nannicini, On curvature tensors of Norden
and metallic pseudo-Riemannian manifolds, Complex Manifolds 6 (1) (2019),
150--159.

\bibitem{Blaga2} A. M. Blaga, M. Crasmareanu, The geometry of complex
conjugate connections, Hacet. J. Math. Stat. 41 (1) (2012), 119--126.

\bibitem{Zhang} O. Calin, H. Matsuzoe, J. Zhang, Generalizations of
conjugate connections. Trends in differential geometry, complex analysis and
mathematical physics, 26--34, World Sci. Publ., Hackensack, NJ, 2009.

\bibitem{Spinadel} V. W. de Spinadel, The metallic means family and
renormalization group techniques, Proc Steklov Inst Math Control Dynamic
Systems 1 (2000), 194--209.

\bibitem{Fei} T. Fei, J. Zhang, Interaction of Codazzi couplings with
(para-)K\"{a}hler geometry, Results Math. 72 (4) (2017), 2037--2056.

\bibitem{Gezer1} A. Gezer, H. Cakicioglu, Notes concerning Codazzi pairs on
almost anti-Hermitian manifolds, \ to appear Appl. Math. J. Chinese Univ.
(2022).

\bibitem{Goldberg} S. I. Goldberg, K. Yano, Polynomial structures on
manifolds, Kodai Math. Sem. Rep. 22 (1970), 199--218.

\bibitem{Salimov5} M. Iscan, A. A. Salimov, On K\"{a}hler-Norden manifolds,
Proc. Indian Acad. Sci. Math. Sci. 119 (1) (2009), 71--80.

\bibitem{Takano} K. Takano,Statistical manifolds with almost complex
structures, Tensor (N.S.) 72 (3) (2010), 225--231.

\bibitem{Lauritzen} S. Lauritzen. In: Statistical Manifolds, Eds. S. Amari,
O. Barndorff-Nielsen, R. Kass, S. Lauritzen, C. R. Rao, Differential
Geometry in Statistical Inference, IMS Lecture Notes, vol. 10, Institute of
Mathematical Statistics, Hayward, 1987, 163--216.

\bibitem{Manev} M. Manev, D. Mekerov, On Lie groups as quasi-K\"{a}hler
manifolds with Killing Norden metric, Adv. Geom. 8 (3) (2008), 343--352.

\bibitem{Matsuzoe} H. Matsuzoe, Statistical manifolds and its
generalization, in Proceedings of the 8th International Workshop on Complex
Structures and Vector Fields, World Scientific, 2007.

\bibitem{Nagaoka} H. Nagaoka, S. Amari, Differential geometry of smooth
families of probability distributions, Technical Report (METR) 82--7, Dept.
of Math. Eng. and Instr., Univ. of Tokyo, 1982.

\bibitem{Nomizu} K. Nomizu, Affine connections and their use, in Geometry
and Topology of Submanifolds VII , ed. F. Dillen, World Scientific, 1995.

\bibitem{Norden} A. P. Norden, Affinely Connected Spaces GRMFL, Moscow, 1976
(in Russian).

\bibitem{Vanzura} J. Vanzura, Integrability conditions for polynomial
structures, Kodai Math. Sem. Rep. 27 (1-2) (1976), 42-50.

\bibitem{Salimov1} A. A. Salimov, M. Iscan, F. Etayo, Paraholomorphic
B-manifold and its properties, Topology Appl. 154 (2007), 925-933.

\bibitem{Salimov2} A. Salimov, Tensor operators and their applications,
Mathematics Research Developments, Nova Science Publishers, Inc., New York,
2013. xii+186 pp.

\bibitem{Salimov3} A. A. Salimov, Almost analyticity of a Riemannian metric
and integrability of a structure, Trudy Geom. Sem. Kazan. Univ. 15 (1983),
72-78.

\bibitem{Salimov4} Salimov, A.A. A new method in the theory of liftings of
tensor fields in a tensor bundle; translation in Russian Math. (Iz. VUZ) 38
(3) (1994), 67-73.

\bibitem{Salimov6} A. A. Salimov, M. Iscan, K. Akbulut, Notes on
para-Norden-Walker 4-manifolds, Int. J. Geom. Methods Mod. Phys. 7 (8)
(2010), 1331--1347.

\bibitem{Tachiban} S. Tachibana, Analytic tensor and its generalization,
Tohoku Math. J. 12 (1960), 208-221.
\end{thebibliography}
\end{document}